\newtheorem{theorem}{Theorem}[section]
\newtheorem{proposition}[theorem]{Proposition}
\newtheorem{corollary}[theorem]{Corollary}
\newtheorem{lemma}[theorem]{Lemma}
\theoremstyle{definition}
\theoremstyle{remark}
\author[C. Dell'Aiera]{Cl\'ement Dell'Aiera}
\address{Department of Mathematics, UMPA, ENS Lyon
46 allée d’Italie
69342 Lyon Cedex 07
FRANCE}
\email{clement.dellaiera@ens-lyon.fr}
\thanks{}
\date{\today}
\title[Coarse geometry of Hecke pairs and the Baum-Connes conjecture]{Coarse geometry of Hecke pairs and the Baum-Connes conjecture} 
\begin{document}

\maketitle

\begin{abstract} We study Hecke pairs using the coarse geometry of their coset space and their Schlichting completion. We prove new stability results for the Baum-Connes and the Novikov conjectures in the case where the pair is co-Haagerup. This allows to generalize previous results, while providing new examples of groups satisfying the Baum-Connes conjecture with coefficients. For instance, we show that for some $S$-arithmetic subgroups of $Sp(5,1)$ and $Sp(3,1)$ the conjecture with coefficients holds.
\end{abstract}

\tableofcontents
\section{Overview and statement of the results}

The Baum-Connes conjecture predicts that the $K$-theory groups of the reduced $C^*$-algebra of a locally compact group, which is the norm closure of the complex algebra generated by the left regular representation, is isomorphic to the equivariant $K$-homology of the group's classifying space for proper actions.
One of its most spectacular applications is the descent principle, that allows to derive the Novikov conjecture from a certain form of injectivity of the Baum-Connes assembly map. See section \ref{BC} for a reminder with references for both statements.

The conjectures are known to hold in many cases, and the Baum-Connes conjecture has various stability properties. For instance, groups with the Haagerup property satisfy the Baum-Connes conjecture with coefficients, and the conjecture is stable by extensions. Moreover if a group acts by isometries on a tree with stabilizers that satisfy the Baum-Connes conjecture, then so does the group. Recall that the Haagerup property can be defined as the existence of a metrically proper action on a real affine Hilbert space by isometries. 

This leads to the following question: if a group acts on a real affine Hilbert space by isometries, suppose that one orbit is a proper subspace, but possibly with infinite isotropy subgroups. Can we deduce the Baum-Connes conjecture for the group if the stabilizer satisfies it?

In this setting, the typical stabiblizer of the proper orbit is co-Haagerup in the ambient group, and this forces the subgroup to be almost normal, in the sense that it is commensurable to any of its conjugate. We answer by the affirmative to the question with the following.

\begin{theorem}
Let $\Lambda < \Gamma$ be a co-Haagerup subgroup of a discrete countable group. Then, if all subgroups of $\Gamma$ containing $\Lambda$ as a subgroup of finite index satisfy the Baum-Connes conjecture with coefficients, so does $\Gamma$.
\end{theorem}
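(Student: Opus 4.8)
The plan is to read the co-Haagerup hypothesis as a Higson--Kasparov situation for $\Gamma$ that is proper only \emph{relative} to the family $\maF$ of subgroups lying between $\Lambda$ and $\Gamma$ with finite index, and then to run the Dirac--dual-Dirac machinery modulo this family. To set the stage I would first pass to the Schlichting completion $G=\overline{\Gamma}$: a second countable, totally disconnected locally compact group in which $\Gamma$ is dense and $U=\overline{\Lambda}$ is a compact open subgroup, with $G/U\cong\Gamma/\Lambda$ as $\Gamma$-sets. The gain is conceptual: a proper $\Gamma$-invariant conditionally negative definite kernel on $\Gamma/\Lambda$ is the same datum as a proper such kernel on $G/U$, and, $U$ being compact and open, this is equivalent to the Haagerup property of $G$ itself. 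I would take this equivalence from the earlier sections and simply invoke it.

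Because $G$ is second countable locally compact with the Haagerup property, the Higson--Kasparov theorem gives that $G$ has a $\gamma$-element equal to $1$ and therefore satisfies the Baum--Connes conjecture with coefficients. The affine isometric action of $G$ witnessing Haagerup restricts to a proper-relative-to-$\Lambda$ affine action of $\Gamma$: the orbit map $\Gamma/\Lambda\to\maH$ into the associated Hilbert space is metrically proper, while the base coset is fixed by $\Lambda$. Thus the action fails to be proper only because of the isotropy along $\Gamma/\Lambda$, and the natural way to organize this isotropy is through a Rips-type complex on the coset space, whose simplices are finite subsets of $\Gamma/\Lambda$ of bounded diameter for the proper metric supplied by the kernel. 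The setwise stabilizer of such a simplex containing the base coset is a subgroup $H$ with $\Lambda\le H\le\Gamma$ that permutes finitely many cosets, hence $[H:\Lambda]<\infty$; so every stabilizer appearing is, up to conjugacy and isomorphism, a member of $\maF$.

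The core step is then a \emph{relative} Dirac--dual-Dirac argument: transporting the mutually inverse $KK^G$-classes along $\Gamma<G$ and combining them with the Bott map of the proper-relative action produces, over the Rips complex, a $\gamma$-type element whose associated comparison shows that the $\Gamma$-equivariant assembly map with coefficients in any $A$ is an isomorphism \emph{provided} the assembly maps for the stabilizers are. Since the Baum--Connes property with coefficients is invariant under group isomorphism and is assumed for every subgroup containing $\Lambda$ with finite index, it holds for every stabilizer, and the comparison then forces the assembly map for $\Gamma$ itself to be an isomorphism.

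The step I expect to be the main obstacle is precisely this descent from $G$ to the \emph{dense} subgroup $\Gamma$. Denseness is what makes the argument nontrivial: the usual permanence of Baum--Connes with coefficients under passage to \emph{closed} subgroups does not apply, so the restriction of $\gamma_G$ cannot conclude on its own, and one must genuinely exploit the coarse geometry shared by $\Gamma/\Lambda=G/U$. Concretely, the difficulty is to carry out the relative Dirac--dual-Dirac construction with estimates uniform enough to glue the contributions of the infinitely many simplices, and to verify that the only isotropy data feeding into the reduced assembly map are the finite-index overgroups of $\Lambda$, so that the hypothesis is exactly what is needed and is also sufficient.
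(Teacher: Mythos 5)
Your setup matches the paper's --- Schlichting completion $(G,K)$, the equivalence of co-Haagerup for $(\Gamma,\Lambda)$ with the Haagerup property for $G$, and Higson--Kasparov applied to $G$ --- and you correctly identify that the real issue is descending from the locally compact group $G$ to the \emph{dense} subgroup $\Gamma$. But the step you yourself call the core of the argument, a ``relative Dirac--dual-Dirac'' construction over a Rips complex on $\Gamma/\Lambda$ producing a $\gamma$-type element that reduces $BC(\Gamma,A)$ to the assembly maps of the simplex stabilizers, is not an available theorem and you do not carry it out: you explicitly leave the uniform estimates and the gluing over infinitely many simplices unresolved. Since this step \emph{is} the theorem, the proposal is a program, not a proof. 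There is also a concrete error in your isotropy analysis: the setwise stabilizer $H$ of a finite subset $F\subset \Gamma/\Lambda$ containing the base coset does \emph{not} satisfy $\Lambda\le H$ in general --- $\Lambda$ fixes $e\Lambda$ but moves the other cosets of $F$ inside their (finite) $\Lambda$-orbits, and need not preserve $F$. Such stabilizers are only \emph{commensurable} with $\Lambda$, which is exactly why the paper's working statement (Theorem \ref{THM1}) assumes Baum--Connes with coefficients for every subgroup commensurable with $\Lambda$, a strictly stronger-looking family than the finite-index overgroups in your (and the introduction's) formulation; your argument would in any case need this enlarged hypothesis.

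The paper circumvents the dense-subgroup descent entirely, using only established permanence machinery rather than a bespoke $\gamma$-element for $\Gamma$. It realizes $\Gamma$ as a \emph{closed} subgroup of $\Gamma\times G$, namely the graph $\{(\gamma,\sigma_\gamma)\}$, which is the stabilizer of $e_G$ for the transitive action $(\gamma,g)\cdot x=\sigma_\gamma x g^{-1}$ of $\Gamma\times G$ on $G$; Shapiro's lemma of Chabert--Echterhoff \cite{chabert2003shapiro} then converts $BC(\Gamma,A)$ into $BC(\Gamma\times G,C_0(G,A))$. The assembly map for the product factors through the partial assembly map $\mu^{(G)}_{\Gamma,C_0(G,A)}$ of \cite{chabert2000baum}; Higson--Kasparov \cite{higson2001theory} disposes of the $G$-side (i.e.\ $BC(G,C_0(G,A)\rtimes_r\Gamma)$), and the partial assembly map is checked by covering $\underline{E}G$ by pieces $G\times_L U$ with $L<G$ compact open, a Mayer--Vietoris argument, and the restriction principle: as a $\Gamma\times L$-space, $G$ is a \emph{finite} union of orbits with stabilizers $\sigma^{-1}(L)$, which are commensurable with $\Lambda$, so Green's imprimitivity converts the hypothesis on those subgroups into exactly what is needed. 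If you wanted to push your route through, you would essentially be re-proving this going-down technique by hand over your Rips complex, after first correcting the stabilizer claim as above; the paper's factorization is what lets the Haagerup input enter only once, as a black box for the locally compact group $G$.
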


Being almost normal is weaker than co-Haagerup. It is actually equivalent to $\Gamma / \Lambda$ being of bounded geometry, if we equip $\Gamma / \Lambda$ with the metric induced from a left proper metric on $\Gamma$. With this in mind, we deduce the following from the theorem.

\begin{corollary}
Let $\Lambda < \Gamma$ be a Hecke pair. If $\Lambda$ and $\Gamma / \Lambda$ admit a coarse embedding into a Hilbert space, then $\Gamma$ satisfies the Novikov conjecture.
\end{corollary}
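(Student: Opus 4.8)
The plan is to deduce the Novikov conjecture from coarse embeddability of $\Gamma$ into Hilbert space. Recall that a countable group which coarsely embeds into Hilbert space satisfies the (strong) Novikov conjecture, by the work of Yu and Skandalis--Tu--Yu. Since the metric on $\Gamma$ is proper and left-invariant, every ball $B(g,R)=gB(e,R)$ has the same finite cardinality as $B(e,R)$, so $\Gamma$ automatically has bounded geometry and this implication applies without any finite-generation hypothesis. It therefore suffices to produce a coarse embedding $\Gamma \to \mathcal{H}$, and for this I would combine the given embeddings of the fiber $\Lambda$ and of the base $\Gamma/\Lambda$ along the projection $\pi\colon \Gamma \to \Gamma/\Lambda$.

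Next I would set up the fibering. The map $\pi$ is $1$-Lipschitz for the induced quotient metric, its fibers are the left cosets $g\Lambda$, and left-invariance of the metric makes left translation $\lambda \mapsto g\lambda$ an isometry $\Lambda \xrightarrow{\sim} g\Lambda$. Hence all fibers are isometric to $\Lambda$ and are \emph{equi}-coarsely embeddable, with common control functions coming from the single given embedding of $\Lambda$. The base $\Gamma/\Lambda$ is coarsely embeddable by hypothesis and, crucially, has bounded geometry: this is where the Hecke (almost normal) assumption enters, via the equivalence recalled before the corollary between almost normality and bounded geometry of $\Gamma/\Lambda$. Thus I am exactly in the situation of an extension with coarsely embeddable, bounded-geometry base and equi-embeddable fibers.

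The core of the argument is then a permanence statement for coarse embeddability along such a fibering, in the spirit of the extension theorem of Dadarlat--Guentner. I would work with the kernel characterization: $\Gamma$ coarsely embeds iff for every $R$ and $\epsilon$ there is a positive-definite kernel on $\Gamma$ that is within $\epsilon$ of $1$ on $\{d(g,g')\le R\}$ and vanishes once $d(g,g')$ exceeds some $S=S(R,\epsilon)$. Pulling back a base kernel $p$ along $\pi$ gives a positive-definite kernel $\tilde p(g,g')=p(\pi g,\pi g')$ that is close to $1$ when the cosets are close and small when they are far, controlling the horizontal direction but constant along fibers. For the vertical direction I would manufacture, from the embedding of $\Lambda$, a kernel that is close to $1$ for nearby elements of a common coset and small for distant ones, and take the Schur product with $\tilde p$; this product is close to $1$ near the diagonal, while away from the diagonal either the cosets are far (so $\tilde p$ is small) or the cosets are close but the vertical displacement is large (so the fiber factor is small), giving controlled support.

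The main obstacle is precisely the construction of this vertical kernel, because $\Lambda$ is only assumed coarsely embeddable, not a-T-menable, so its embedding need not arise from a left-invariant (equivalently, conditionally negative definite) kernel, and there is in general no coarse section of $\pi$ allowing one to transport a single kernel on $\Lambda$ coherently across different cosets. This is the usual twisting problem in extension theorems, and the point is that the vertical kernel need only be accurate where $\tilde p$ is already close to $1$, i.e.\ over a bounded neighbourhood of each coset; bounded geometry of $\Gamma/\Lambda$ guarantees that such a neighbourhood meets only boundedly many cosets, so the ambiguity in choosing coset representatives is uniformly bounded and the fiber kernels can be patched with uniformly controlled error. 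Concretely I would realize this uniformity through the Schlichting completion $G\supset\Gamma$, in which $\overline{\Lambda}=K$ is a compact open subgroup with $\Gamma\cap K=\Lambda$: compactness of $K$ bounds the discrepancy between nearby cosets $gK$ and lets one average the translated $\Lambda$-kernels into a genuine positive-definite kernel on $\Gamma$ of controlled support. Assembling the horizontal and vertical kernels yields, for each $R$ and $\epsilon$, the required kernel on $\Gamma$, hence a coarse embedding $\Gamma\to\mathcal{H}$ and, by the first step, the Novikov conjecture.
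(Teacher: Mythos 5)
Your route is genuinely different from the paper's: the paper never shows that $\Gamma$ itself coarsely embeds, but instead runs the Baum--Connes machinery (Deng's compact $\Gamma$-space attached to the embeddable subgroup $\Lambda$, a second compact space built from the coarse groupoid of $\Gamma/\Lambda$ and the Schlichting completion, Tu's theorem for a-T-menable actions, then the Shapiro/Green/partial-assembly factorization and Roe's descent principle). Your strategy --- prove coarse embeddability of $\Gamma$ directly and quote Skandalis--Tu--Yu --- is a viable alternative, and your reduction is set up correctly: the fibers of $\pi\colon\Gamma\to\Gamma/\Lambda$ are isometric to $(\Lambda, d_\Gamma|_\Lambda)$ via left translation (harmless, since any two proper left-invariant metrics on a countable group are coarsely equivalent), and the base has bounded geometry exactly by the Hecke hypothesis, so the preimage of any $R$-ball is a union of at most $N(R)$ isometric copies of $\Lambda$. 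At that point the Dadarlat--Guentner fibering/extension permanence (equi-embeddable ball-preimages over an embeddable bounded-geometry base, via finite-union permanence) applies verbatim and closes the argument; this would in fact yield the somewhat stronger conclusion that $\Gamma$ coarsely embeds.

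However, as written your central step has a genuine gap, in two places. First, the kernel characterization you state is wrong: requiring the positive-definite kernel to \emph{vanish} once $d(g,g')>S$ is (for bounded geometry spaces) the characterization of Yu's property~(A) --- the paper itself quotes it as such --- whereas coarse embeddability only provides kernels that are uniformly \emph{small} at infinity; neither $\Lambda$ nor the base is assumed to have property~(A), so you must run the argument with decaying kernels. Second, and more seriously, the Schur-product construction does not survive the twisting problem you correctly identify: any globally positive-definite ``vertical'' kernel you can actually build from a coarse embedding of $\Lambda$ (block-diagonally over the cosets, via chosen representatives) vanishes, or is uncontrolled, on pairs lying in \emph{distinct} nearby cosets, so the Schur product with the pulled-back base kernel fails to be close to $1$ on $\{d\le R\}$ precisely where it matters. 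Your proposed fix --- averaging translated $\Lambda$-kernels over the compact open subgroup $K$ of the Schlichting completion --- is not a construction: a mere coarse embedding of $\Lambda$ carries no $K$-invariance, and there is no evident reason the average is positive definite with the required near-diagonal behavior. The correct repair is exactly Dadarlat--Guentner's gluing, which does not take a Schur product of two global kernels but assembles the fiberwise embeddings with a cutoff/partition of unity over the bounded-geometry base (equivalently, invoke their fibering permanence as a black box). With that substitution your proof is complete; without it, the step as described would fail.
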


The paper is organized as follows. The second section gives a geometric characterization of Hecke pairs: a subgroup is almost normal iff the coset space with the quotient metric is of bounded geometry. In the third section, we review the construction of the Schlicting completion of a Hecke pair, a totally disconnected locally compact group that acts as a replacement of the quotient group when the subgroup is only almost normal, and prove that a subgroup is co-Haagerup iff the corresponding Schlichting completion has Haagerup's property. Here, we use implicitely that co-Haagerup subgroups are almost normal. The fourth section is devoted to the proof of the main theorem, and the fifth section to the proof of the corollary. In the last section, we apply these results to establish that the Baum-Connes conjecture with coefficients holds for some countable discrete groups. The first examples recover previous known results with a different proof, the second examples are, to the author's knowledge, new. For instance, we have the following.

\begin{corollary}
Let $G$ be an absolutely simple algebraic group over $\mathbb Q$ such that groups containing $G(\mathbb Z)$ as a subgroup of finite index satisfy the Baum-Connes conjecture with coefficients. Let $p$ be a prime number, and supppose that the $\mathbb Q_p$-rank of $G$ is $1$, then $G\left(\mathbb Z[\frac{1}{p}]\right)$ satisfies the Baum-Connes conjecture with coefficients.  
\end{corollary}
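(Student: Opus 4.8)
The plan is to deduce this from the main theorem by taking $\Gamma = G\left(\mathbb{Z}[\frac{1}{p}]\right)$ and $\Lambda = G(\mathbb{Z})$. Once $\Lambda$ is shown to be co-Haagerup in $\Gamma$, the theorem applies verbatim: the subgroups of $\Gamma$ containing $\Lambda$ as a finite-index subgroup are in particular groups containing $G(\mathbb{Z})$ as a finite-index subgroup, so they satisfy the Baum--Connes conjecture with coefficients by the standing hypothesis, and the conclusion follows. Thus the entire problem reduces to verifying the co-Haagerup property for the pair $\left(G(\mathbb{Z}),\, G\left(\mathbb{Z}[\frac{1}{p}]\right)\right)$.

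The first and main step is to identify the Schlichting completion of this pair. I would invoke strong approximation for the absolutely simple, $\mathbb{Q}_p$-isotropic group $G$ (the rank-one hypothesis guarantees $\mathbb{Q}_p$-isotropy): the diagonal map $G\left(\mathbb{Z}[\frac{1}{p}]\right) \to G(\mathbb{Q}_p)$ then has dense image, and one has $G(\mathbb{Z}) = G\left(\mathbb{Z}[\frac{1}{p}]\right) \cap G(\mathbb{Z}_p)$, exhibiting $G(\mathbb{Z})$ as the preimage of the compact open subgroup $G(\mathbb{Z}_p)$. In particular $G(\mathbb{Z})$ is commensurated (almost normal) in $G\left(\mathbb{Z}[\frac{1}{p}]\right)$, and the Schlichting completion $\overline{\Gamma}$ is identified with $G(\mathbb{Q}_p)$, with $\overline{\Lambda} = G(\mathbb{Z}_p)$ as its distinguished compact open subgroup.

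The second step is to show that the completion $G(\mathbb{Q}_p)$ has the Haagerup property, which by the characterization of Section 3 is equivalent to $G(\mathbb{Z})$ being co-Haagerup in $G\left(\mathbb{Z}[\frac{1}{p}]\right)$. Here the hypothesis that the $\mathbb{Q}_p$-rank of $G$ equals $1$ is decisive: the Bruhat--Tits building of $G$ over $\mathbb{Q}_p$ is then a tree $T$, on which $G(\mathbb{Q}_p)$ acts by isometries with compact open vertex stabilizers, hence metrically properly. Since a metrically proper isometric action on a tree produces a proper conditionally negative-definite function, $G(\mathbb{Q}_p)$ has the Haagerup property. Combining this with the previous step shows that $G(\mathbb{Z})$ is co-Haagerup in $G\left(\mathbb{Z}[\frac{1}{p}]\right)$, and the main theorem yields the Baum--Connes conjecture with coefficients for $G\left(\mathbb{Z}[\frac{1}{p}]\right)$.

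The main obstacle is the first step, namely the precise identification of the Schlichting completion with $G(\mathbb{Q}_p)$: this rests on the applicability of strong approximation, hence on $G$ being simply connected (or on interpreting the phrase \emph{absolutely simple} with the appropriate simply-connectedness and isotropy conventions), and on tracking the commensurability data carefully so that no spurious finite factors appear in $\overline{\Gamma}$ or $\overline{\Lambda}$. By contrast, the Haagerup verification via the proper action on the Bruhat--Tits tree is entirely standard, and the final invocation of the theorem is formal.
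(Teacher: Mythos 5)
Your proposal is correct and follows essentially the same route as the paper: identify the Schlichting completion of the pair $\left(G\left(\mathbb Z[\frac{1}{p}]\right), G(\mathbb Z)\right)$ with that of $\left(G(\mathbb Q_p), G(\mathbb Z_p)\right)$ via strong approximation and Lemma \ref{LmWillis}, use the $\mathbb Q_p$-rank-one hypothesis to get a proper isometric action on the Bruhat--Tits tree (hence a-T-menability of the completion, i.e.\ the co-Haagerup property by the characterization of Section 3), and then apply Theorem \ref{THM1}. The only micro-caveat, which you already flag yourself, is that the completion is really $G(\mathbb Q_p)/N$ with $N$ the largest normal subgroup contained in $G(\mathbb Z_p)$ (e.g.\ $PSL$ rather than $SL$), but this is harmless since the Haagerup property passes to such quotients.
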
   

This can be applied when $G(\mathbb Z)$ is a uniform lattice in $Sp(3,1)$ and $Sp(5,1)$ (or $SO(n,1)$ for $n=5,7,9$) since these are Gromov hyperbolic groups, and thus satisfies the Baum-Connes conjecture with coefficients.\\

\textbf{Acknowledgments.} The author would like to thank Erik Guentner for helpful discussions on this topic, and for suggesting the question in the first place. He is also indebted to Hervé Oyono-Oyono for comments on the first draft. \\


\section{Coarse geometry and Hecke pairs}

Let $\Gamma$ be a discrete group. A subgroup $\Lambda < \Gamma$ is \textit{almost normal} if one of the following equivalent conditions is satisfied: 
\begin{itemize}
\item for every $\gamma\in\Gamma$, $\Lambda$ and $\Lambda^\gamma = \gamma \Lambda \gamma^{-1}$ are commensurable (i.e. they contain a common subgroup of finite index),
\item the index $[\Lambda :\Lambda \cap \Lambda^\gamma]$ is finite, for every $\gamma\in \Gamma$,  
\item the left action of $\Lambda$ on $\Gamma / \Lambda$ has finite orbits,
\item every double coset $\Lambda s \Lambda$ is a finite union of cosets $\gamma\Lambda$.  
\end{itemize}

In this case, we call $(\Gamma,\Lambda)$ a Hecke pair. The equivalence is easily seen since the cardinal of the orbit $\Lambda g \Lambda $ is the index $[\Lambda : \Lambda\cap g\Lambda g^{-1}]$. Let us fix a left $\Gamma$-invariant metric on $\Gamma$, given by a proper length $|\cdot |$. We endow $X=\Gamma/ \Lambda$ with the left $\Gamma$-invariant metric 
$$d(s\Lambda , t\Lambda) = \inf_{\lambda, \lambda'\in \Lambda} |\lambda s^{-1}t\lambda' |.$$ 

Recall that a metric space $(X,d)$ is of \textit{bounded geometry} if for every $r>0$, $\sup_{x\in X} |B(x,r)|$ is finite. 

\begin{proposition}
The coset space $X=\Gamma /\Lambda$ is of bounded geometry iff $(\Gamma,\Lambda)$ is a Hecke pair.  
\end{proposition}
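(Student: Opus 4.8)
The plan is to exploit the homogeneity of $X$. Since the formula defining $d$ is manifestly left $\Gamma$-invariant, $\Gamma$ acts on $X=\Gamma/\Lambda$ by isometries and transitively on points; hence every ball of radius $r$ is isometric to the ball $B(e\Lambda,r)$ centred at the base point, so $\sup_{x\in X}|B(x,r)|=|B(e\Lambda,r)|$. Thus $X$ has bounded geometry if and only if $|B(e\Lambda,r)|<\infty$ for every $r>0$, and it suffices to analyse this single ball. The first observation is that, by definition,
\[
  d(e\Lambda,t\Lambda)=\inf\{\,|g| : g\in\Lambda t\Lambda\,\}
\]
is the least length occurring in the double coset $\Lambda t\Lambda$.

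Next I would translate the ball into group-theoretic data. Writing $S_r=\{g\in\Gamma:|g|\le r\}$, which is \emph{finite} because $|\cdot|$ is proper, I claim
\[
  B(e\Lambda,r)=\bigcup_{g\in S_r}\Lambda\cdot(g\Lambda),
\]
the union being over the left $\Lambda$-orbits of the cosets $g\Lambda$ in $X$. Indeed, $t\Lambda\in B(e\Lambda,r)$ means $|\lambda t\lambda'|\le r$ for some $\lambda,\lambda'\in\Lambda$; setting $g=\lambda t\lambda'\in S_r$ gives $t\Lambda=\lambda^{-1}g\Lambda$, which lies in the orbit of $g\Lambda$, and the reverse inclusion is the same computation run backwards. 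The stabiliser of $g\Lambda$ under this left action is $\Lambda\cap g\Lambda g^{-1}$, so the orbit $\Lambda\cdot(g\Lambda)$ has cardinality $[\Lambda:\Lambda\cap g\Lambda g^{-1}]$, exactly the index appearing in the definition of a Hecke pair.

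From here both implications are short. If $(\Gamma,\Lambda)$ is a Hecke pair, then by the third (equivalently second) characterisation every left $\Lambda$-orbit in $X$ is finite, so $B(e\Lambda,r)$ is a finite union (as $S_r$ is finite) of finite orbits, hence finite, and $X$ has bounded geometry. Conversely, if $X$ has bounded geometry, fix $g\in\Gamma$ and put $r=|g|$; every element $\lambda\cdot(g\Lambda)=\lambda g\Lambda$ of the orbit of $g\Lambda$ satisfies $d(e\Lambda,\lambda g\Lambda)\le|g|=r$ (take the inner infimum at $\lambda^{-1}$ on the left and $e$ on the right), so the whole orbit sits inside the finite ball $B(e\Lambda,r)$. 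Thus every left $\Lambda$-orbit is finite, which is the Hecke condition.

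The content is really the dictionary in the displayed identity, so I do not expect a serious obstacle; the only points requiring care are the two finiteness inputs — that properness of $|\cdot|$ makes each $S_r$ finite, and that the Hecke condition is precisely finiteness of the left $\Lambda$-orbits — together with the homogeneity reduction that lets me test bounded geometry at the single base point. A minor preliminary worth recording is that the infimum defining $d$ is attained and that $d$ is a genuine metric, both of which follow from properness of the length.
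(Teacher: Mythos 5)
Your proof is correct and takes essentially the same approach as the paper: you identify the ball $B(e\Lambda,r)$ with $\bigcup_{|\gamma|\le r}\Lambda\gamma\Lambda$, viewed as a finite union (by properness of the length) of left $\Lambda$-orbits which are finite by the Hecke condition, and for the converse you trap each $\Lambda$-orbit inside a finite ball via the isometric action fixing the base point, exactly as the paper traps them inside spheres. Your explicit remarks on attainment of the infimum and the orbit-stabilizer count $[\Lambda:\Lambda\cap g\Lambda g^{-1}]$ merely make precise what the paper leaves implicit.
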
 

\begin{proof}
The metric being left invariant and the action transitive, it is enough to show that any ball of finite radius is finite. But $d(g\Lambda, \Lambda) \leq r$ iff 
$$g\in \cup_{|\gamma|\leq r} \Lambda \gamma \Lambda .$$
$\Gamma$ is of bounded geometry, so that the latter is a finite union of double cosets $\Lambda \gamma \Lambda$, themselves being a finite union of left $\Lambda$-cosets by almost normality. 

Now, if $X$ is of bounded geometry, $\Gamma$ acts by isometries, by left invariance of the metric. As $\Lambda$ stabilizes the base point, it stabilizes all spheres, and thus its orbits are contained in those, which are finite.  
\end{proof}

This gives a large class of examples of Hecke pairs. Let $\Gamma$ be a discrete group acting by isometries on a locally finite metric space, then any stabilizer is almost normal. For instances, groups acting by isometries on locally finite trees, such as HNN extensions and amalgated free products, have almost normal subgroups.
\begin{itemize}
\item If $BS(m,n) = \langle a,b | a^{-1}b^m a = b^n\rangle$ is the Baumslag-Solitar group, then $\mathbb Z \cong \langle b\rangle $ is an almost normal subgroup.     
\item $SL(2,\mathbb Z)$ is almost normal in $SL(2,\mathbb Z[\frac{1}{p}])$, by considering its restricted action on the Bass-Serre tree of $SL(2,\mathbb Q_p)$.
\end{itemize}

Other examples do not readily come from isometric actions. The previous proposition gives a geometric interpretation to these pairs.     
\begin{enumerate}
\item If $\Gamma$ is a discrete group acting on a set $X$, and $Y\subset X$ a commensurated subset, i.e. the symmetric difference $|Y\Delta \gamma Y|<\infty$ for every $\gamma\in \Gamma $, and $F$ a finite group, then $\bigoplus_Y F$ is almost normal in the (generalized) wreath product $F\wr_X \Gamma = (\bigoplus_X F)\rtimes \Gamma$. If one specifies $\Gamma =\mathbb Z$ and $Y=\mathbb N\subset X=\mathbb Z$, we get an almost normal subgroup of the Lamplighter group.\label{wrExpl}
\item $SL(n,\mathbb Z)$ is almost normal in $SL(n,\mathbb Q)$. 
More generally, arithmetic lattices in global fields have commensurated subgroups: if $F$ is a global field, and $\mathcal O$ its ring of integers, let $G$ be an absolutely simple, simply connected algebraic group over $F$. Let $S$ and $S'$ be sets of inequivalent valuations on $F$, containing all archimedean ones, and such that $S'\subset S$. We denote by $\mathcal O_S$ the ring of $S$-integers in $F$. A $S$-arithmetic group is a subgroup commensurable with $G(\mathcal O_S)$. Then if $\Gamma$ is a $S$-arithmetic group, any $S'$-arithmetic group $\Lambda$ is almost normal in $\Gamma$.  
\end{enumerate}

\section{The Schlichting completion and coarse embeddings} %

Let $(\Gamma ,\Lambda)$ be a Hecke pair and $X=\Gamma / \Lambda$. There exists a locally compact totally discontinuous Hecke pair $(G,K)$ where $K$ is a compact open subgroup of $G$, and a homomorphism $\sigma : \Gamma \rightarrow G$ with dense image satisfying $\sigma^{-1}(K) = \Lambda$, hence inducing isomorphisms $\Gamma/\Lambda\cong G/K$ and $\Lambda \backslash\Gamma/\Lambda\cong K \backslash G/K$. This construction was introduced by Schlichting in \cite{schlichting}, and used extensively by Tzanev  in \cite{tzanev2003hecke}.   

Let us recall the construction: we endow the group of permutations $\mathfrak S(X)$ with the topology induced from pointwise convergence in the space of maps from $X$ to $X$. It is a standard fact that this makes $\mathfrak S(X)$ a Polish group. We denote by $\sigma: \Gamma \rightarrow \mathfrak S(X) $ the representation by permutation, and by $G$ (respectively $K$) the closure of the image of $\Gamma$ (respectively $\Lambda$) by $\sigma$. These are totally discontinuous groups. 

From this follows that $K$ is compact open if $(\Gamma,\Lambda)$ is a Hecke pair, thus $G$ is locally compact. Indeed, $K$ is a closed subgroup of the group $$\prod_{[g]\in \Lambda \backslash X} \mathfrak S(\Lambda g \Lambda/\Lambda),$$ which is compact as a  product of finite groups (the topology of pointwise convergence coincides with the product topology). It is also the stabilizer of a point, $K=Stab_G(\Lambda)$, hence it is open since the finite intersections of stabilizers form a basis for the topology of pointwise convergence. The group $G$ thus has a compact open neighborhood of the identity.

The following points are important: 
\begin{itemize}
\item If $\Lambda$ is normal, the pair $(G,K)$ is $(\Gamma / \Lambda , 1)$,
\item If $\Lambda$ is finite, then $N=\cap_\gamma \Lambda^\gamma$ is a finite normal subgroup of $\Gamma$ contained in $\Lambda$, and $(G,K) \cong (\Gamma / N, \Lambda /N)$.
\item The definition of a Hecke pairs makes sense if $\Gamma$ is locally compact and $\Lambda$ is open (and closed) in $\Gamma$. Then the previous remarks remain true, if finite is replaced by compact. In general, Hecke pairs in totally disconnected locally compact groups are useful, with almost normal subgroups given by compact open subgroups.
\end{itemize}

We see that the biggest normal subgroup contained in $\Lambda$ is $N=\cap_\gamma \Lambda^\gamma$. We will call $N$ the core of $\Lambda$. The Hecke pair is a substitute for the quotient group in the absence of normality. It is thus natural to focus on \textit{reduced Hecke pairs}, i.e. such that $N$ is trivial. If a pair $(\Gamma, \Lambda)$ is not reduced, its reduced pair will be $(\Gamma/N, \Lambda / N)$. A useful result to identify the Schlichting completion of a Hecke pair is the following.

\begin{lemma}[lemma 3.5 in \cite{shalom2013commensurated}]\label{LmWillis}
Let $(\Gamma,\Lambda)$ a Hecke pair. Suppose there exist a locally compact group $G$, a compact open subgroup $K<G$, and a homomorphism $\psi : \Gamma \rightarrow G$ such that $\psi(\Gamma) $ is dense in $G$ and $\psi^{-1}(K)=\Lambda$. Then the Schlichting completion of $(\Gamma,\Lambda)$ and $(G,K)$ coincide. In particular, that of $\Gamma$ is isomorphic to $G/N$, where $N$ is the largest normal subgroup contained in $K$.  
\end{lemma}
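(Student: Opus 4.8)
The plan is to realize both Schlichting completions inside one and the same permutation group $\mathfrak S(G/K)$, and to show that each of them arises as the \emph{already closed} image of $G$ in that group. The first ingredient is the $\Gamma$-equivariant bijection $\Gamma/\Lambda \cong G/K$. The map $\gamma\Lambda \mapsto \psi(\gamma)K$ is well defined and injective precisely because $\psi^{-1}(K)=\Lambda$, and it is surjective because $\psi(\Gamma)K$ is open (a union of cosets of the open subgroup $K$) and also closed (its complement is again such a union), hence equal to $G$ by density of $\psi(\Gamma)$. Here $\Gamma$ acts on $G/K$ through $\psi$ followed by left translation, and this action is exactly the defining permutation representation $\sigma$ of $\Gamma$ on $X=\Gamma/\Lambda$ transported across the identification.

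Next I would bring in the permutation representation $\rho\colon G \to \mathfrak S(G/K)$ of $G$ on its own coset space. Its kernel is $\bigcap_{g\in G} gKg^{-1}=N$, the largest normal subgroup of $G$ contained in $K$, so $\rho$ factors through an injection $\bar\rho\colon G/N \hookrightarrow \mathfrak S(G/K)$, and $\rho\circ\psi=\sigma$ under the identification above. Since $\psi(\Gamma)$ is dense in $G$ and $\rho$ is continuous, $\rho\psi(\Gamma)$ is dense in $\rho(G)$; likewise $\psi(\Lambda)=\psi(\Gamma)\cap K$ is dense in $K$, so $\rho\psi(\Lambda)$ is dense in $\rho(K)$. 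Consequently the closures defining the Schlichting completion of $(\Gamma,\Lambda)$ coincide with the closures of $\rho(G)$ and $\rho(K)$, which are in turn the closures defining the completion of $(G,K)$.

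The heart of the argument, and the step I expect to be the main obstacle, is to show that $\rho(G)$ is \emph{already} closed in $\mathfrak S(G/K)$ and that $\bar\rho$ is a homeomorphism onto it; once this is in place, all the closures above collapse. For closedness I would exploit that $G$ acts transitively on $G/K$ with compact stabilizers: given a net $(g_\alpha)$ with $\rho(g_\alpha)\to\tau$ in $\mathfrak S(G/K)$, the values $g_\alpha\cdot eK$ are eventually constant and equal to $\tau(eK)$; choosing $g\in G$ with $g\cdot eK=\tau(eK)$, the net $g^{-1}g_\alpha$ eventually lies in the compact subgroup $K=\operatorname{Stab}_G(eK)$, so a subnet converges to some $k\in K$, and continuity of $\rho$ yields $\tau=\rho(gk)\in\rho(G)$. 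That $\bar\rho$ is a homeomorphism onto its image follows because $\bar\rho$ carries the compact open subgroup $K/N$ onto $\rho(G)\cap\operatorname{Stab}_{\mathfrak S(G/K)}(eK)$, which is open in $\rho(G)$; a continuous bijection from the compact group $K/N$ to this Hausdorff image is automatically a homeomorphism, and homogeneity propagates the local homeomorphism to all of $G/N$.

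Finally I would assemble the pieces. The group of the completion of $(\Gamma,\Lambda)$ is $\overline{\rho\psi(\Gamma)}=\overline{\rho(G)}=\rho(G)\cong G/N$, while its compact part is $\overline{\rho\psi(\Lambda)}=\rho(K)\cong K/N$ (the kernel of $\rho|_K$ being $N$ itself, since $N\subseteq K$). Running the identical computation with $\Gamma$ replaced by $G$ produces the Schlichting completion of $(G,K)$, so the two completions coincide and both equal $(G/N,K/N)$. In particular the Schlichting completion of $\Gamma$ is $G/N$, as claimed.
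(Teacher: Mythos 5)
Your proof is correct: the $\Gamma$-equivariant identification $\Gamma/\Lambda \cong G/K$ (using that $\psi(\Gamma)K$ is clopen, hence all of $G$), the closedness of $\rho(G)$ in $\mathfrak S(G/K)$ via compactness of the stabilizer $K$, the identification of $\operatorname{Ker}\rho$ with $N$, and the openness of $\rho(K)=\rho(G)\cap \operatorname{Stab}(eK)$ making $\bar\rho$ a topological isomorphism onto $\rho(G)$ are all sound, and together they do yield that both completions equal $(G/N,K/N)$. Note that the paper itself gives no proof of this statement --- it imports it as Lemma 3.5 of \cite{shalom2013commensurated} --- and your argument is essentially the standard one from that reference, so it matches the intended proof rather than offering an alternative route.
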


Here are some examples of computation of Schlichting completions. 
\begin{itemize}
\item If $\Lambda = \bigoplus_Y F$ in $\Gamma= F\wr_X G$, as in \ref{wrExpl} with $F$ finite and $Y$ commensurated in $X$, let us define $G = P\rtimes \Gamma$ where 
$$P = (\prod_X F )\oplus ( \bigoplus_{\Gamma\backslash X} F )\subset \prod_\Gamma F.$$
Then $\Gamma \hookrightarrow G $ satisfies the hypothesis of the lemma, with $K= \prod_X F$. The core of $K$ is easily seen to be $N=\prod_{\cap_\gamma \gamma\cdot X} F$, and $G/N$ is the Schlichting completion in that case. Notice that in the case where the intersection of all translates of $X$ is trivial, $N$ also is, so that $G$ is the Schlichting completion of $\Gamma$.  

\item For $p$ prime, the Schlichting completion of $(SL(n,\mathbb Z[\frac{1}{p}]), SL(n,\mathbb Z))$ is $PSL(n,\mathbb Q_p)$, by using $SL(n,\mathbb Z[\frac{1}{p}]) \hookrightarrow SL(n,\mathbb Q_p)$. 
\end{itemize}

This last example is a particular case of a general statement. Let $\mathbb F$ a number field, and $\mathcal O$ its ring of integers. Let $\overline{\mathbb F}$ and $\overline O$ their completions with respect to all non-Archimedian valuations of $\mathbb F$. Then, if $G$ is an algebraic group over $\mathbb F$, $(G(\mathbb F), G(\mathcal O))$ is a Hecke pair, and its Schlichting completion is the reduced pair of their respective completion in $G(\overline{ \mathbb F})$ (see \cite{tzanev2003hecke}, after proposition 4.1).	

Recall that a group is a-T-menable, also called Haagerup's property, if there exists a real valued continuous function on $G$ that is proper and conditionnaly of negative type (see \cite{cherix2001groups}, chapter 1). We also recall that a metric space with bounded geometry
\begin{itemize} 
\item admits a coarse embedding into Hilbert space if there exists a symmetric normalised kernel on $X$ that is conditionnaly of negative type and effectively proper (see \cite{cornulier2012proper}, definition 5.6), 
\item has Yu's property (A) if for every positive numbers $\varepsilon$ and $r$, there exists a symmetric normalised kernel on $X$ of positive type with finite propagation and $(r,\varepsilon)$-propagation (see \cite{willett2006some}, theorem 1.2.4). 
\end{itemize}
Furthermore, a subgroup $\Lambda <\Gamma$ is c\o -Folner iff $\Gamma / \Lambda$ carries a $\Gamma$-invariant mean. Exactness of a locally compact group is defined as exactness of the reduced crossed-product. It is known to be equivalent to \textit{amenability at infinity}, that is that $G$ admits an amenable action on some compact Hausdorff space (see \cite{brodzki2017exactness}).  

From these definitions (that are actually theorems), we see that a discrete group is a-T-menable iff it admits a $\Gamma$-equivariant coarse embedding into Hilbert space, and that it is amenable iff it satisfies property (A)'s condition with the kernel being $\Gamma$-equivariant.
\begin{proposition}\label{CEHATM}
With the notation above:
\begin{itemize}
\item $X$ admits a $\Gamma$-equivariant coarse embedding into a $\Gamma$-Hilbert space iff $G$ has Haagerup's property,
\item $X$ admits a coarse embedding into a Hilbert space iff the action of $G$ on $\beta X$ is a-T-menable. 
\item $\Lambda $ is co-Fl\o ner in $\Gamma$ iff $G$ is amenable,
\item $X$ has Yu's property (A) iff $G$ is exact.  
\end{itemize}
\end{proposition}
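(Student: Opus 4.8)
The plan is to prove the four equivalences in two groups, according to whether the witnessing kernel is required to be invariant (the first and third) or not (the second and fourth). In both cases the passage between $X=G/K$ and $G$ rests on three features of the construction of the previous section: the image $\sigma(\Gamma)$ is dense in $G$, the set $X$ is discrete with \emph{open} point-stabilisers, and $K$ is compact. The first two combine into a propagation principle that I will use repeatedly: any $\Gamma$-invariant kernel, mean, or function on the discrete set $X=G/K$ is automatically $G$-invariant. Indeed, if $\gamma_i\to g$ in $G$ then, stabilisers being open, $\gamma_i$ eventually agrees with $g$ on any finite subset of $X$, so invariance under $\sigma(\Gamma)$ propagates to $G$ by continuity.

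For the two invariant statements I would set up a dictionary between $\Gamma$-invariant objects on $X$ and $K$-bi-invariant objects on $G$. By the propagation principle, a $\Gamma$-invariant symmetric normalised kernel of conditionally negative type on $X$ is $G$-invariant, hence is exactly a $K$-bi-invariant conditionally negative type function $\psi$ on $G$ via $\Phi(gK,hK)=\psi(g^{-1}h)$; this $\psi$ is automatically continuous, being locally constant on the open double cosets. Since $X$ has bounded geometry, balls are finite, so effective properness of $\Phi$ is equivalent to properness of $\psi$; this identifies $\Gamma$-equivariant coarse embeddings of $X$ with proper conditionally negative type functions on $G$, i.e. with Haagerup's property, the Hilbert space being the $G$-representation coming from the GNS construction. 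For the converse I would start from a proper conditionally negative type function on $G$, use that the compact group $K$ fixes a point of the underlying affine isometric action, and recenter there: this makes the cocycle vanish on $K$ and yields a $K$-bi-invariant proper conditionally negative type function, the recentering being by a bounded amount and hence harmless for properness. The co-F\o lner/amenable equivalence is the exact analogue with means: a $\Gamma$-invariant mean on $X$ is a $G$-invariant mean on $G/K$, and, $K$ being compact, such a mean exists iff $G$ is amenable (push it up to $G$ by integrating against normalised Haar measure on $K$).

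For the two non-invariant statements I would route everything through the coarse geometry of $X$ and the dynamics of $G$ on $\beta X$. Since $K$ is compact, the quotient map realises a coarse equivalence between $G$, with a proper left-invariant metric, and $X$, so property A and coarse embeddability of $X$ coincide with the corresponding coarse properties of $G$. I would then invoke the dynamical characterisations of these coarse properties through the coarse groupoid $\mathcal{G}(X)\subset\beta(X\times X)$ of Skandalis, Tu and Yu: property A of $X$ is equivalent to amenability of $\mathcal{G}(X)$, which one compares with amenability of the action $G\curvearrowright\beta X$, i.e. amenability at infinity of $G$, equivalent to exactness by the result cited in \cite{brodzki2017exactness}; and coarse embeddability of $X$ is the conditionally-negative-type analogue, to be compared with a-T-menability of $G\curvearrowright\beta X$. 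At the level of formulas, a symmetric normalised conditionally negative type kernel $\Phi$ on $X$ produces a groupoid function $(g,x)\mapsto\Phi(gx,x)$ whose conditional negative type along the fibre over each $x$ is precisely the conditional negative type of $\Phi$, and conversely.

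The main obstacle is exactly this comparison in the last two statements, between the coarse groupoid $\mathcal{G}(X)$ and the transformation groupoid $G\ltimes\beta X$ appearing in the statement. These are not literally isomorphic: $\mathcal{G}(X)$ is \'etale with all arrows of bounded displacement, whereas $G\ltimes\beta X$ is not \'etale, and a fixed $g\in G$ can move points of $X$ arbitrarily far, the displacement $d(gx,x)$ being governed by the conjugates of $g$ and in general unbounded. Transferring amenability between the two groupoids over the common unit space $\beta X$ is the content of amenability at infinity and is available in the literature; the a-T-menable version is the genuinely delicate point. In particular the naive cocycle $(g,x)\mapsto\Phi(gx,x)$ is unbounded in $x$ for fixed $g$ and so does \emph{not} extend continuously to $\beta X$; the witnessing cocycle for a-T-menability of $G\curvearrowright\beta X$ must instead be a fibrewise-recentred object built from the coarse embedding, and checking that such a recentring is continuous, fibrewise of conditionally negative type, and proper in $g$ uniformly over $\beta X$ is where the real work lies.
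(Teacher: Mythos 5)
Your treatment of the first bullet is sound and matches the paper's mechanism: the paper also pulls kernels back along $G\to X$ (with the same density/open-stabilizer argument promoting $\Gamma$-invariance to $G$-invariance, continuity coming for free since the transported kernel is locally constant), and for the converse it double-averages, setting $\varphi(sK,tK)=\int_K\int_K\phi(k_1s^{-1}tk_2)\,dk_1\,dk_2$; your recentering of the cocycle at a $K$-fixed point is an equivalent standard variant of that averaging. The genuine error is the extension of your propagation principle to \emph{means} in the third bullet. A mean on $\ell^\infty(X)$ is only finitely additive and is not continuous for pointwise convergence of functions: that $\gamma_i$ eventually agrees with $g$ on every finite subset of $X$ gives no control whatsoever on $m(\gamma_i\cdot f)-m(g\cdot f)$, since an invariant mean on an infinite $X$ typically annihilates all finitely supported functions. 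So ``a $\Gamma$-invariant mean on $X$ is automatically $G$-invariant'' is unjustified as stated, and the obvious repairs fail: averaging the mean over $K$ destroys $\Gamma$-invariance (it replaces $\gamma K$-averages by $K\gamma$-averages), and Day's finitely supported almost-invariant measures only give $g\mu_i=\sigma(\gamma_i)\mu_i$ for a $\gamma_i$ depending on $i$, which is not controlled by almost-invariance under fixed elements. The paper avoids means entirely: it phrases co-F\o lner-ness through $\Gamma$-invariant positive type kernels of finite propagation converging to $1$ (the equivariant property (A) characterization it records just before the proposition) and amenability of $G$ through continuous compactly supported positive definite functions, and then transfers these by the same kernel dictionary, where propagation \emph{is} valid because kernels are pointwise objects. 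Your third bullet should be rewritten along those lines.

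For the second and fourth bullets your groupoid route correctly identifies the crux, but as written it does not prove the second bullet: you end by observing that the fibrewise-recentred cocycle on $\beta X\rtimes G$ ``is where the real work lies,'' i.e.\ the step is left open. The paper's proof is different and shorter: it never compares $\mathcal G(X)$ with $\beta X\rtimes G$ inside this proof, but instead observes that the two kernel correspondences match supports --- $\operatorname{supp}\varphi$ lies in an $r$-entourage of $X$ iff $\operatorname{supp}\phi$ lies in the tube $\bigcup_{|s|\le r}K\sigma(s)K$, which is compact --- so finite-propagation kernels on $X$ correspond exactly to compactly supported kernels on $G$, and it then invokes the kernel characterizations recorded before the statement: property (A) of $X$ (Willett's formulation) against exactness, i.e.\ amenability at infinity, of $G$ (Brodzki--Cave--Li), and coarse embeddability of $X$ against a-T-menability of $G$ acting on $\beta X$. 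The extension-to-$\beta X$ problem you flag is real, but it is resolved not by hand-recentring cocycles: one quotes Skandalis--Tu--Yu (coarse embeddability of $X$ is equivalent to a-T-menability of the coarse groupoid $\mathcal G(X)$, where the extension of kernels to Stone--\v{C}ech closures of entourages is done once and for all) together with the surjection $\beta X\rtimes G\to\beta X\rtimes\mathcal G$ with compact kernel (isotropy given by conjugates of $K$) noted right after the proposition: pulling a proper negative type function back along this surjection preserves properness, and the reverse direction is the same $K$-averaging as in the first bullet, now fibred over $\beta X$. This is precisely the mechanism the paper makes explicit in the lemma of its section on the Novikov conjecture; substituting it for your unfinished recentring, and fixing the means step as above, would close your argument.
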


\begin{proof}
The key fact is the correspondance between kernels on $X$ and on $G$. Indeed, the map quotient map $G\rightarrow X$ induces a map that takes kernels on $X$ to kernels on $G$, respects properness and, if the original kernel is $\Gamma$-invariant, its image will be $G$-invariant. Thus, if we have a conditionally negative type $\Gamma$-equivariant metrically proper kernel on $X$, we get a continuous conditionally negative type proper function on $G$. 

For the converse, if we have a continuous conditionally negative type proper function $\phi : G\rightarrow \mathbb R$, then 
$$\varphi(sK,tK) = \int_{K}\int_{K}\phi(k_1s^{-1}t k_2)dk_1dk_2$$
defines a conditionally negative type $\Gamma$-equivariant metrically proper kernel on $X$.  

Remark that these two correspondances respects the support in the following sense: $supp \varphi \subset \{ (x,y)\in X\times X : d(x,y)\leq r\}$ iff $supp \phi \subset \cup_{|s|\leq r} K\sigma(s)K$. Thus kernels supported in an entourage of $X$ correspond to compactly suppported kernels on $G$. This gives the two last points.


\end{proof}

The action of $\Gamma$ on $\beta X$ extends to a continuous action of $G$, and the coarse groupoid of $X$ (see \cite{skandalis2002coarse}) is naturally isomorphic to  a quotient of $$\beta X \rtimes G$$ with compact kernel. In the case of a normal subgroup, we recover the fact that $\mathcal G(X) \cong \beta Q\rtimes Q$ ($Q$ being the quotient group). This implies that $X$ has geometric property (T) (see \cite{willett2014geometric}) iff the action of $G$ on $\beta X$ has dynamical property (T) (see \cite{dell2018topological}), and that the asymptotic dimension of $X$ is the dynamical asymptotic dimension of $G$ acting on $\beta X$ (see \cite{guentner2017dynamic}).

Let $H$ be a subgroup of $\Gamma$. Recall that $H$ is co-Haagerup in $\Gamma$ if there exists a proper $\Gamma$-invariant kernel of conditionally negative type on $\Gamma/H$, and is co-F\o lner if $\Gamma / H$ carries a $\Gamma$-invariant mean. In general, co-F\o lner subgroups are not co-Haagerup (see example 6.1 of \cite{cornulier2012proper}), but in the case of Hecke pairs, it follows from the previous proposition that it this implication holds. Moreover, if $\Lambda < \Gamma$ is co-Haagerup, it is a Hecke pair (see example 6.1 and proposition B.2 of \cite{cornulier2012proper}) and the converse obviously does not hold. We easily see that Hecke pairs which admits a $\Gamma$-equivariant coarse embedding into a Hilbert space are thus exactly the co-Haagerup subgroups.   

\section{Stability of Baum-Connes conjecture for Hecke pairs} %
\label{BC}
The goal of this section is to prove the following.

\begin{theorem}\label{THM1}
Let $(\Gamma , \Lambda)$ be a Hecke pair and $A$ a $\Gamma$-algebra. If every subgroup of $\Gamma$ that is commensurable with $\Lambda$ satisfy the Baum-Connes conjecture with coefficients, and $\Gamma / \Lambda$ admits a $\Gamma$-equivariant coarse embedding into Hilbert space, then $\Gamma$ satisfies the Baum-Connes conjecture with coefficients.
\end{theorem}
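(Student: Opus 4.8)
The plan is to run a two-step comparison of assembly maps for $\Gamma$, using the Schlichting completion to manufacture an affine isometric action on Hilbert space and then invoking the Higson--Kasparov machine in a form relative to a family of subgroups. By Proposition \ref{CEHATM}, the hypothesis that $X=\Gamma/\Lambda$ carries a $\Gamma$-equivariant coarse embedding into Hilbert space is equivalent to the Schlichting completion $G$ having Haagerup's property; concretely it supplies a $\Gamma$-invariant, conditionally negative type, effectively proper kernel on $X$. First I would turn this kernel into a continuous affine isometric action of $\Gamma$ on a real Hilbert space $\mathcal H$ whose orbit map $\gamma\mapsto\gamma v_0$ (with $v_0$ the image of the base coset) is metrically proper \emph{modulo} $\Lambda$: effective properness together with almost normality (the same mechanism that turns a bounded family of double cosets into finitely many left cosets in the bounded-geometry criterion of Section 2) shows that $\{\gamma:\|\gamma v-v\|\le R\}$ is contained in finitely many left $\Lambda$-cosets for every $v\in\mathcal H$ and $R\ge 0$. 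Hence every stabilizer $\mathrm{Stab}_\Gamma(v)$ is covered by finitely many $\Lambda$-cosets, so $[\mathrm{Stab}_\Gamma(v):\mathrm{Stab}_\Gamma(v)\cap\Lambda]<\infty$; that is, each stabilizer is commensurable with a subgroup of $\Lambda$. Let $\mathcal F$ be the family of all such subgroups. Using that commensurability of subgroups is conjugation-invariant \emph{precisely because} $\Lambda$ is almost normal, one checks that $\mathcal F$ is closed under subgroups and conjugation, and that $\mathcal H$ is a model for $E_{\mathcal F}\Gamma$ (the fixed-point set $\mathcal H^H$ is a nonempty affine subspace exactly when $H$ has a bounded orbit, i.e. when $H\in\mathcal F$).

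The first step is to prove that the assembly map relative to $\mathcal F$,
$$\nu\colon K^{\Gamma}_{*}(E_{\mathcal F}\Gamma;A)\longrightarrow K_{*}(A\rtimes_{r}\Gamma),$$
is an isomorphism for every $\Gamma$-algebra $A$. Since the affine action is proper modulo $\mathcal F$, the Bott--Dirac construction of Higson--Kasparov applies $\Gamma$-equivariantly on $\mathcal H$: one assembles a Dirac element $\alpha$ and a dual-Dirac element $\beta$ attached to $\mathcal H$ and proves $\beta\otimes\alpha=1$ by infinite-dimensional Bott periodicity. The only departure from the classical proper case is that properness holds only modulo $\mathcal F$, which is exactly what makes the resulting $\gamma$-element implement the $\mathcal F$-relative assembly rather than the proper one. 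In other words, $\Gamma$ is a-T-menable relative to $\mathcal F$, and therefore satisfies the $\mathcal F$-relative Baum--Connes conjecture with coefficients.

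The second step brings this down to finite subgroups. The Baum--Connes assembly map $\mu_A$ for $\Gamma$ factors as
$$K^{\Gamma}_{*}(\underline{E}\Gamma;A)\xrightarrow{\ \iota\ }K^{\Gamma}_{*}(E_{\mathcal F}\Gamma;A)\xrightarrow{\ \nu\ }K_{*}(A\rtimes_{r}\Gamma),\qquad \nu\circ\iota=\mu_A,$$
where $\iota$ is induced by the canonical $\Gamma$-map $\underline{E}\Gamma\to E_{\mathcal F}\Gamma$. By the transitivity (going-down) principle, $\iota$ is an isomorphism provided every $H\in\mathcal F$ satisfies the Baum--Connes conjecture with coefficients. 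This is where the hypothesis enters: each subgroup commensurable with $\Lambda$ satisfies it by assumption, and since the class of groups satisfying Baum--Connes with coefficients is closed under passage to subgroups and, via the extension result with finite quotient, under finite-index overgroups, it is closed under commensurability; hence any group commensurable with a subgroup of $\Lambda$ — that is, any member of $\mathcal F$ — satisfies it as well. Combining the two steps, $\mu_A=\nu\circ\iota$ is an isomorphism for all $A$, which is the claim.

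I expect the crux to be the first step: carrying out the Dirac/dual-Dirac argument in the relative setting, i.e. producing a $\gamma$-element equal to $1$ from an action that is proper only \emph{modulo} $\mathcal F$. The genuinely proper action lives on $G$, where it yields Higson--Kasparov's theorem directly, but since $A$ is only a $\Gamma$-algebra one cannot simply import $G$'s triviality of $\gamma$ through a crossed-product identification; the construction must instead be performed $\Gamma$-equivariantly on $\mathcal H$, with careful bookkeeping of $\mathcal F$-properness and of the compatibility of the Dirac and dual-Dirac elements with the family $\mathcal F$.
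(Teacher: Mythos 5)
Your blueprint has two load-bearing steps, and both are gaps rather than citations. The first and more serious one is the ``relative Higson--Kasparov theorem'' you ask for in step one: from an affine isometric action that is metrically proper only \emph{modulo} the family $\mathcal F$ of bounded-orbit subgroups, you want the $\mathcal F$-relative assembly map $\nu$ to be an isomorphism. No such theorem exists, and the obstruction is not bookkeeping: the Bott--Dirac argument uses properness essentially, both to make the Higson--Kasparov algebra $\mathcal A(\mathcal H)$ a \emph{proper} $\Gamma$-algebra and to identify the topological side of assembly with the $K$-theory of a crossed product by a proper algebra. When properness only holds modulo a family with infinite members, the analogous identification of $K^{\Gamma}_{*}(E_{\mathcal F}\Gamma;A)$ with $K_{*}\bigl((A\otimes\mathcal A(\mathcal H))\rtimes_{r}\Gamma\bigr)$ is itself a Baum--Connes-type statement about the stabilizers; your ``crux'' contains essentially the whole theorem. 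The paper avoids this entirely, and in fact takes exactly the route you dismiss at the end: it \emph{does} import the a-T-menability of the Schlichting completion $G$ through a crossed-product identification. Concretely, $\Gamma$ embeds as the closed stabilizer of $e_G$ for the $\Gamma\times G$-action $(\gamma,g)\cdot x=\sigma_\gamma x g^{-1}$, so Shapiro's lemma of Chabert--Echterhoff gives $BC(\Gamma,A)\Leftrightarrow BC(\Gamma\times G, C_0(G,A))$; the factorization through the partial assembly map splits this into $BC(G, C_0(G,A)\rtimes_r\Gamma)$ --- which holds by Higson--Kasparov since $G$ is a-T-menable by Proposition \ref{CEHATM} and its coefficient algebra is unconstrained --- and the partial statement $BC^{(G)}(\Gamma, C_0(G,A))$, proved by covering $\underline{E}G$ by pieces $G\times_L U$ with $L$ compact open, a Mayer--Vietoris argument, and Green/restriction, which reduces exactly to $BC(H,A)$ for the subgroups $H=\sigma^{-1}(L)$ commensurable with $\Lambda$. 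In other words, the genuinely proper affine action lives on the locally compact group $G$, and the ``relative'' content of your step one is absorbed by the partial assembly map rather than by a new Dirac/dual-Dirac machine on $\mathcal H$.

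Your second step also contains a false permanence claim: the class of groups satisfying Baum--Connes with coefficients is \emph{not} known to be closed under finite-index overgroups, so it is not known to be closed under commensurability. Oyono-Oyono's extension theorem for finite (more generally a-T-menable) quotients has as its hypothesis precisely that \emph{all} subgroups containing the normal subgroup with finite index satisfy the conjecture with coefficients --- this is exactly why Theorem \ref{THM1} quantifies over the full commensurability class of $\Lambda$ rather than over $\Lambda$ alone; if finite-index overgroups inherited the conjecture, the theorem's hypothesis could be weakened to $BC(\Lambda,\cdot)$, which nobody knows how to do. The error bites concretely in your family $\mathcal F$: the bounded-orbit stabilizers are the subgroups $H$ with $[H:H\cap\Lambda]<\infty$, i.e.\ groups commensurable with a \emph{subgroup} of $\Lambda$, not with $\Lambda$. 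When $H\cap\Lambda$ has infinite index in $\Lambda$, the hypothesis of the theorem combined with subgroup permanence yields the conjecture for $H\cap\Lambda$ but says nothing about its finite-index overgroup $H$, so the transitivity principle in your second step cannot be fed for all of $\mathcal F$. (By contrast, the subgroups the paper's reduction actually produces, $\sigma^{-1}(L)$ for compact open $L\le K$, are finite-index in $\Lambda$, hence honestly commensurable with $\Lambda$ and covered by the hypothesis.) So even granting a relative Higson--Kasparov theorem, your deduction would prove the statement only under a strictly stronger hypothesis than the one Theorem \ref{THM1} assumes.
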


This generalizes previous results:
\begin{itemize}
\item If $\Lambda$ is normal, the theorem reduces to a particular case of Oyono-Oyono's stability result of Baum-Connes by extensions (see \cite{oyono2001ext}), namely the case where the quotient is a-T-menable.  
\item If $\Gamma/\Lambda$ embeds into a locally finite tree, the theorem reduces to Oyono-Oyono's stability result of Baum-Connes for groups acting on trees (see \cite{oyono2001baum}).
\end{itemize}

The theorem relies on Higson-Kasparov result that a-T-menable groups satisfies the Baum-Connes conjecture with coefficients (\cite{higson2001theory}). It implies that if a group admits an action by isometries on a real Hilbert space with an orbit that is proper as a metric space, and the commensurated class of the stabilizer satisfies the Baum-Connes conjecture with coefficients, then the group also does.

If $\Lambda$ and $\Gamma$ are discrete groups, let us say that $\Gamma$ is a co-Haagerup extension if $\Lambda$ is isomorphic to an almost normal subgroup of $\Gamma$ such that the resulting quotient equivariantly coarsely embeds into a Hilbert space. We define $\mathcal C$ to be the smallest class of groups containing a-T-menable groups and Gromov hyperbolic groups, that is closed under co-Haagerup extensions. The theorem implies the following.

\begin{corollary}\label{COR1}
All groups of class $\mathcal C$ satisfies the Baum-Connes conjecture with coefficients.
\end{corollary}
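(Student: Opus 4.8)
The plan is to prove Corollary \ref{COR1} by structural induction on the class $\mathcal C$, using Theorem \ref{THM1} as the inductive step. The class $\mathcal C$ is defined as the smallest class containing a-T-menable groups and Gromov hyperbolic groups that is closed under co-Haagerup extensions, so every group in $\mathcal C$ is built from these base cases by finitely many applications of the co-Haagerup extension operation. The base cases are immediate: a-T-menable groups satisfy Baum-Connes with coefficients by the Higson-Kasparov theorem \cite{higson2001theory}, and Gromov hyperbolic groups do so as well (by the work of Lafforgue and of Mineyev-Yu). So the entire content of the proof lies in showing that the property of satisfying Baum-Connes with coefficients is preserved under co-Haagerup extensions, which is exactly what Theorem \ref{THM1} provides.

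First I would make the induction precise. Since $\mathcal C$ is a smallest class closed under a unary-ish operation (co-Haagerup extension takes a group $\Gamma$ in which some almost normal $\Lambda$ sits with a-T-menable quotient), I would argue that $\mathcal C$ is the union of an increasing sequence of classes $\mathcal C_0 \subset \mathcal C_1 \subset \cdots$, where $\mathcal C_0$ consists of the a-T-menable and Gromov hyperbolic groups, and $\mathcal C_{n+1}$ is obtained from $\mathcal C_n$ by adjoining all co-Haagerup extensions $\Gamma$ of groups already known to lie in $\mathcal C_n$. The assertion to prove, namely that every $\Gamma \in \mathcal C$ satisfies Baum-Connes with coefficients, then follows if I show by induction on $n$ that every group in $\mathcal C_n$ does.

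The inductive step is where Theorem \ref{THM1} enters. Suppose $\Gamma$ is a co-Haagerup extension witnessed by an almost normal subgroup $\Lambda < \Gamma$ with $\Gamma/\Lambda$ admitting a $\Gamma$-equivariant coarse embedding into Hilbert space, and suppose the relevant subgroups have already been placed in $\mathcal C_n$ at an earlier stage. The hypothesis of Theorem \ref{THM1} requires that \emph{every} subgroup of $\Gamma$ commensurable with $\Lambda$ satisfy Baum-Connes with coefficients, so the key point I must verify is that the definition of co-Haagerup extension, together with membership of $\Lambda$'s commensurability class in $\mathcal C$, supplies exactly this. Since $\Lambda$ (and hence each subgroup commensurable with it) is taken to lie in the previously constructed part of $\mathcal C$, the inductive hypothesis gives Baum-Connes for all of them; applying Theorem \ref{THM1} with an arbitrary coefficient $\Gamma$-algebra $A$ then yields Baum-Connes with coefficients for $\Gamma$.

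The main obstacle I anticipate is the bookkeeping around the phrase \emph{commensurability class} in the definition of co-Haagerup extension versus the precise quantifier \emph{every subgroup commensurable with $\Lambda$} in Theorem \ref{THM1}. I would need to check that the class $\mathcal C$ is genuinely closed under passing to subgroups of finite index and to finite-index overgroups within a commensurability class, so that the inductive hypothesis truly covers all subgroups commensurable with $\Lambda$ and not merely $\Lambda$ itself; otherwise the hypothesis of the theorem might fail to be met. Once the class is seen to be stable in this sense (which should follow from how co-Haagerup extensions are defined, since a-T-menability and Baum-Connes are commensurability invariants), the induction closes and the corollary is immediate. No delicate analysis is required beyond this combinatorial verification, as all the hard $K$-theoretic work is encapsulated in Theorem \ref{THM1} and the Higson-Kasparov and hyperbolic base cases.
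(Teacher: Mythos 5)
Your overall strategy---stratifying $\mathcal C=\bigcup_n \mathcal C_n$ with $\mathcal C_0$ the a-T-menable and Gromov hyperbolic groups, and inducting on $n$ with Theorem \ref{THM1} as the inductive step and Higson--Kasparov and Lafforgue as base cases---is exactly how the paper intends the corollary to follow (the paper offers no more than ``the theorem implies the following''). The gap lies in the step you yourself flagged and then dismissed: you close the induction by asserting that Baum--Connes with coefficients is a commensurability invariant. It is not known to be. The conjecture with coefficients passes to (closed) subgroups by Chabert--Echterhoff, but its stability under finite-index \emph{overgroups} is open; this is precisely why Theorem \ref{THM1} (and the version stated in the introduction) quantifies over \emph{all} subgroups of $\Gamma$ commensurable with $\Lambda$ (respectively, over the family $\mathcal S_\Lambda$ of all $L$ with $\Lambda\le L\le\Gamma$ of finite index) instead of assuming the conjecture for $\Lambda$ alone. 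Moreover, inside your induction the appeal is circular: deducing the conjecture for a subgroup $L\le\Gamma$ commensurable with $\Lambda$ from the conjecture for $\Lambda$ is exactly an instance of the unknown finite-index permanence, and your inductive hypothesis only covers groups that lie in $\mathcal C_n$, not groups merely commensurable with such.

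Note that at the first level the induction does close, because a-T-menability and hyperbolicity genuinely are commensurability invariants, so every subgroup of $\Gamma$ commensurable with $\Lambda\in\mathcal C_0$ lies again in $\mathcal C_0$ and Theorem \ref{THM1} applies. At level $n\ge 2$, however, you would need $\mathcal C_n$ itself to be commensurability-closed, and this does not follow from the definition as you interpreted it: if $\Lambda'$ is commensurable with a co-Haagerup extension $\Lambda$ of some $M\in\mathcal C_{n-1}$, neither the almost normality of $M$ (commensuration by elements of a finite-index overgroup is not formal) nor the equivariant coarse embedding transfers automatically to $\Lambda'$. The honest repair---and evidently the intended reading, compare the introduction's phrase that ``the commensurated class of the stabilizer satisfies the Baum--Connes conjecture with coefficients''---is to build the requirement into the closure operation: declare $\Gamma\in\mathcal C_{n+1}$ only when \emph{every} subgroup of $\Gamma$ commensurable with the witnessing almost normal subgroup $\Lambda$ (equivalently, every member of the family $\mathcal S_\Lambda$ appearing in the proof of Theorem \ref{THM1}) already lies in $\mathcal C_n$. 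With that definition your stratified induction goes through verbatim; with the justification you gave, the inductive step is unsupported.
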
  

See section \ref{EXPL} for a discussion on the class $\mathcal C$.


\subsection{Preliminaries}  

We first establish general conventions and notations, then give an overview of the proof. 

Let $G$ be a locally compact group, and $A$ a $G$-algebra, by which we mean a $C^*$-algebra endowed with an action $\alpha : G\rightarrow Aut(A)$ of $G$ by $*$-automorphisms. We suppose as usual that $g\mapsto \alpha_g(a) $ is continuous for every $a\in A$. We will often leave $\alpha$ implicit. We will denote the reduced-crossed product by $A\rtimes_r G$. 

We say that $G$ satisfies the Baum-Connes conjecture with coefficients in $A$ if the Baum-Connes assembly map 
$$\mu_{G,A} : K_\bullet^{top}(G,A) \rightarrow K_\bullet (A\rtimes_r G)$$ 
is an isomorphism (see \cite{baum1994classifying} for a definition). For convenience, we will write $BC(G,A)$ for this statement. If the coefficients are not specified, they are meant to be the complex numbers with trivial $G$-action. The conjecture with coeffcients means that $BC(G,A)$ holds for all $G$-algebras $A$.

The Baum-Connes conjecture with coefficients is known to hold for
\begin{itemize}
\item a-T-menable groups (Higson-Kasparov \cite{higson2001theory})
\item Gromov hyperbolic groups (Lafforgue \cite{lafforgue2012conjecture})
\item groups acting on trees with a-T-menable stabilizers (Oyono-Oyono \cite{oyono2001baum}).
\end{itemize} 

Counterexamples with non trivial coefficients are known (see \cite{higson2002counterexamples}). With complex coefficients, the Baum-Connes conjecture is still open, and it also holds for discrete cocompact subgroups of rank one real Lie groups or $SL(3,F)$ for a local field $F$ (Lafforgue \cite{lafforgue2002k}).

In the case of a product group $G=G_1\times G_2$, $A\rtimes_r G_1$ is a $G_2$-algebra, $A\rtimes_r G\cong (A\rtimes_r G_1)\rtimes_r G_2$, and the assembly map can be factored by a partial assembly map. Indeed, let  
$$\mu_{G_1,A}^{(G_2)} : K_\bullet^{top}(G_1\times G_2,A) \rightarrow K_\bullet^{top}(G_2,A\rtimes_r G_1)$$
be the partial assembly map, first defined in \cite{chabert2000baum} (see definition 3.9, or section 2 of \cite{chabert2004going}). Then the following diagram commutes
\[\begin{tikzcd}
K_\bullet^{top}(G,A) \arrow{r}{\mu_{G_1,A}^{(G_2)}}\arrow{rd}{\mu_{G,A}} & K_\bullet^{top}(G_2,A\rtimes_r G_1)\arrow{d}{\mu_{G_2,A\rtimes_r G_1}} \\
                                  & K_\bullet(A\rtimes_r G)
\end{tikzcd}.\]

We will use $BC^{(G_2)}(G_1,A)$ to refer to the statement that $\mu_{G_1,A}^{(G_2)}$ is an isomorphism. 

The second ingredient in the proof is the use of Morita invariance of the Baum-Connes assembly map. In our case, we can restrict to Shapiro's lemma, proved in \cite{chabert2003shapiro}. 

Recall that if $H$ is a closed subgroup of a locally compact group $G$, and $A$ a $H$-algebra with $H$-action $\alpha$, the induced algebra $ind_H^G(A)$ is defined as the sub-$C^*$-algebra of the bounded continuous functions $f:G\rightarrow A$ satisfying $f(gh) = \alpha_h(f(g))$ for every $g\in G, h\in H$, and such that the function $ gH \mapsto \| f(gH) \|$ belongs to $C_0(G/H)$. It is a $G$-$C^*$-algebra with the $G$-action $\alpha_g(f)(s)=f(g^{-1} s)$ for $f\in ind_H^G(A)$ and $g,s\in G$. 

\begin{proposition}[Corollary 0.6 \cite{chabert2003shapiro}] 
Let $H$ be a closed subgroup of a locally compact group $G$, and $A$ a $H$-algebra. Then $BC(G,ind_H^G(A))$ holds iff $BC(H,A)$ does.
\end{proposition}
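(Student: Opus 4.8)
The plan is to show that both the source and the target of the assembly map $\mu_{G,ind_H^G(A)}$ are naturally isomorphic to the corresponding objects attached to the pair $(H,A)$, and that these isomorphisms intertwine the two assembly maps. Once this is done, the two-out-of-three property for isomorphisms immediately gives that $\mu_{G,ind_H^G(A)}$ is an isomorphism if and only if $\mu_{H,A}$ is.

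First I would treat the analytic (right-hand) side. By Green's imprimitivity theorem, the induced algebra gives rise to a Morita equivalence
$$ind_H^G(A)\rtimes_r G \ \sim \ A\rtimes_r H,$$
implemented by an explicit imprimitivity bimodule built from $C_c(G,A)$. Morita invariance of $K$-theory then yields a canonical isomorphism $K_\bullet(ind_H^G(A)\rtimes_r G)\cong K_\bullet(A\rtimes_r H)$. For the topological (left-hand) side I would use the induction functor in equivariant $KK$-theory together with the compatibility of universal proper actions under restriction: a model for the classifying space $\underline{E}G$ for proper $G$-actions restricts, via the map $G\times_H\underline{E}H\to\underline{E}G$, to a model adapted to proper $H$-actions. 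Passing to the colimit over $G$-compact subsets that defines $K_\bullet^{top}$, induction produces an isomorphism $K_\bullet^{top}(H,A)\cong K_\bullet^{top}(G,ind_H^G(A))$.

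The heart of the argument, and the step I expect to be the main obstacle, is to show that these two isomorphisms intertwine the assembly maps, that is, that the square
\[\begin{tikzcd}
K_\bullet^{top}(H,A) \arrow{r}{\mu_{H,A}}\arrow{d}{\cong} & K_\bullet(A\rtimes_r H)\arrow{d}{\cong} \\
K_\bullet^{top}(G,ind_H^G(A)) \arrow{r}{\mu_{G,ind_H^G(A)}} & K_\bullet(ind_H^G(A)\rtimes_r G)
\end{tikzcd}\]
commutes. This requires unwinding $\mu$ as the composition of Kasparov's descent morphism with the Kasparov product against the Mishchenko-type projection associated to a cutoff function on the classifying space. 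One then checks naturality of the descent functor with respect to induction, and uses associativity of the Kasparov product to identify the descent of the induction bimodule with Green's imprimitivity bimodule. Keeping track of cutoff functions and the colimit structure simultaneously on both sides is the delicate bookkeeping here.

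Once commutativity of the square is established, the conclusion is formal: if $\mu_{H,A}$ is an isomorphism, then composing with the two vertical isomorphisms forces $\mu_{G,ind_H^G(A)}$ to be an isomorphism as well, and the converse follows symmetrically. I would remark that the statement holds verbatim with the \emph{partial} assembly maps of the previous subsection, since descent and Green's equivalence are equally natural in that refined setting, which is what makes the result usable in the proof of Theorem \ref{THM1}.
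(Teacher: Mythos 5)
The first thing to note is that the paper does not prove this proposition at all: it is imported verbatim as Corollary 0.6 of \cite{chabert2003shapiro}, so your attempt has to be measured against the proof in that reference. Your overall architecture is the correct one and does match how the cited result is organized: Green's imprimitivity theorem gives the Morita equivalence $ind_H^G(A)\rtimes_r G\sim A\rtimes_r H$ (valid for reduced crossed products), there is an induction homomorphism on the topological side, the compatibility of the two with the assembly maps (your commuting square) was established by Chabert and Echterhoff in their earlier permanence work, and the equivalence then follows by two-out-of-three. So the skeleton is right, and you correctly identify that the square's commutativity requires unwinding descent and the Mishchenko projection.

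The genuine gap is the left-hand vertical arrow, which you present as routine colimit bookkeeping; in fact the statement that $K_\bullet^{top}(H,A)\to K_\bullet^{top}(G,ind_H^G(A))$ is an isomorphism \emph{is} the main theorem of \cite{chabert2003shapiro}, and the mechanism you propose does not deliver it. The direction that is fine is that the restriction of $\underline{E}G$ to $H$ is a model for $\underline{E}H$; but the induced space $G\times_H\underline{E}H$ is in general \emph{not} a universal proper $G$-space, because a compact subgroup of $G$ need not be subconjugate to $H$ (already for $H=\{e\}$ the space $G\times_H\underline{E}H=G$ is free, hence a model for $EG$ rather than $\underline{E}G$). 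Consequently the $G$-compact subsets of the form $G\times_H X$, with $X$ an $H$-compact subset of $\underline{E}H$, are not cofinal among the $G$-compact subsets of $\underline{E}G$: the colimit over such induced pieces computes (essentially by imprimitivity at the level of $KK$) the left-hand side $K_\bullet^{top}(H,A)$, and the whole content of the theorem is that this colimit nevertheless exhausts $K_\bullet^{top}(G,ind_H^G(A))$ --- so "passing to the colimit" as you phrase it begs the question. In the reference this is overcome by genuinely $K$-theoretic work: reduction to compact subgroups via slice-type decompositions of proper actions and Mayer--Vietoris arguments (of the same flavor as the covering by sets $G\times_L U$ used later in this paper's proof of Theorem \ref{THM1}), together with the generalized Green--Julg isomorphism for proper coefficient algebras. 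Your closing remark that everything holds "verbatim" for the partial assembly maps is likewise not automatic; in the paper that refinement is exactly what the restriction principle of \cite{chabert2000baum} and \cite{chabert2004going} is invoked for.
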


Our strategy to prove theorem \ref{THM1} is the following.
\begin{enumerate}
\item We realize $\Gamma$ as a closed sugroup of $\Gamma\times G$, where $G$ is the Schlichting completion of the Hecke pair $(\Gamma, \Lambda)$. 
\item We define a transitive continuous action of $\Gamma\times G$ on $G$, with stabilizers isomorphic to $\Gamma$. Shapiro's lemma ensures that $BC(\Gamma,A)$ is equivalent to $BC(\Gamma\times G,C_0(G,A))$.
\item If $\Gamma/\Lambda$ admits a $\Gamma$-equivariant coarse embedding into a Hilbert space, then $G$ is a-T-menable and thus satisfies the Baum-Connes conjecture with coefficients. Factorization by the partial assembly map ensures that it is enough to prove $BC^{(G_2)}(\Gamma, C_0(G,A))$ in order to show $BC(\Gamma\times G,C_0(G,A))$.
\item We show that the Baum-Connes conjecture for all subgroups $L<\Gamma$ containing $\Lambda$ as a subgroup of finite index implies $BC^{(G_2)}(\Gamma, C_0(G,A))$. 
\end{enumerate}

\subsection{Proof}  

Let $A$ be a $\Gamma$-algebra. Define the action of $\Gamma\times G$ on $C_0(G,A)$ by  
$$((\gamma, g)\cdot f)(x) = \gamma\cdot (f(\gamma x g^{-1})).$$

\begin{proposition}
In the above setting, if $\mu_{\Gamma, C_0(G,A)}^{(G)}$ and $\mu_{G,C_0(G,A)\rtimes_r \Gamma }$ are isomorphisms, then $\Gamma$ satisfies the Baum-Connes conjecture with coefficients in $A$.  
\end{proposition}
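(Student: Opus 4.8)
The plan is to establish the two claimed isomorphisms by invoking the factorization of the assembly map through the partial assembly map, together with Shapiro's lemma, exactly along the lines of the four-step strategy laid out just above the statement. The key observation that must be verified first is that the action of $\Gamma \times G$ on $C_0(G,A)$ defined by $((\gamma,g)\cdot f)(x) = \gamma\cdot(f(\gamma x g^{-1}))$ arises as an induced algebra. Concretely, I would consider the continuous transitive action of $\Gamma \times G$ on $G$ given by $(\gamma,g)\cdot x = \gamma x g^{-1}$, where $\Gamma$ acts through its dense image $\sigma(\Gamma)\subset G$. Since $\sigma(\Gamma)$ is dense and $K$ is open, this action is transitive, and the stabilizer of the identity $e\in G$ is $\{(\gamma,g): \sigma(\gamma) = g\}\cong \Gamma$, a closed subgroup of $\Gamma\times G$. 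One then checks that $C_0(G,A)$, with the action above, is isomorphic as a $(\Gamma\times G)$-algebra to $\mathrm{ind}_{\Gamma}^{\Gamma\times G}(A)$, where $A$ is regarded as a $\Gamma$-algebra via the stabilizer identification.

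Granting this identification, Shapiro's lemma (the cited Corollary 0.6 of \cite{chabert2003shapiro}) yields that $BC(\Gamma, A)$ holds if and only if $BC(\Gamma\times G, C_0(G,A))$ holds. Thus it suffices to prove the latter. Here I would apply the factorization diagram for the product $\Gamma\times G$ (with $G_1 = \Gamma$ and $G_2 = G$): the full assembly map $\mu_{\Gamma\times G, C_0(G,A)}$ is the composition of the partial assembly map $\mu_{\Gamma, C_0(G,A)}^{(G)}$ followed by $\mu_{G, C_0(G,A)\rtimes_r \Gamma}$. Since both of these maps are assumed to be isomorphisms in the hypothesis of the proposition, their composite $\mu_{\Gamma\times G, C_0(G,A)}$ is an isomorphism, which is precisely the statement $BC(\Gamma\times G, C_0(G,A))$. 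Combining this with Shapiro's lemma gives $BC(\Gamma, A)$, i.e.\ $\Gamma$ satisfies the Baum-Connes conjecture with coefficients in $A$.

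The main obstacle, and the step requiring genuine care rather than formal bookkeeping, is the verification that $(C_0(G,A), (\gamma,g)\cdot f)$ is genuinely the induced algebra $\mathrm{ind}_{\Gamma}^{\Gamma\times G}(A)$ in the precise technical sense recalled before the proposition — that is, identifying the homeomorphism $(\Gamma\times G)/\Gamma \cong G$ compatibly, checking the equivariance covariance relation $f(\omega h) = \alpha_h(f(\omega))$ for $h$ in the stabilizer, and confirming the $C_0$-condition on the norm function descends correctly. One must be attentive to the twist coming from the diagonal-type embedding of $\Gamma$ and to the fact that $\Gamma$ acts on $G$ through the non-injective-in-general but dense map $\sigma$; the identity $\sigma^{-1}(K)=\Lambda$ and the properties of the Schlichting completion established earlier are what make the stabilizer computation and the transitivity work out. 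Once this identification is pinned down, the remainder of the argument is a direct application of the two black-box results (Shapiro's lemma and the factorization of $\mu$) and involves no further analytic input.
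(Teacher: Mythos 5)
Your proof is correct and takes essentially the same route as the paper's: the same transitive action $(\gamma,g)\cdot x=\sigma(\gamma)xg^{-1}$ on $G$, the same closed stabilizer $\{(\gamma,\sigma(\gamma)):\gamma\in\Gamma\}\cong\Gamma$, Shapiro's lemma (Corollary 0.6 of \cite{chabert2003shapiro}) to reduce $BC(\Gamma,A)$ to $BC(\Gamma\times G, C_0(G,A))$, and the factorization of $\mu_{\Gamma\times G, C_0(G,A)}$ as $\mu_{G, C_0(G,A)\rtimes_r\Gamma}\circ\mu^{(G)}_{\Gamma, C_0(G,A)}$. The identification of $C_0(G,A)$ with the induced algebra $\mathrm{ind}_{\Gamma}^{\Gamma\times G}(A)$, which you rightly flag as the step needing care, is precisely what the paper leaves implicit in its appeal to Shapiro's lemma, so there is no substantive difference.
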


\begin{proof}
Let $\Gamma\times G$ act on $G$ by $$(\gamma , g)\cdot x = \sigma_\gamma x g^{-1}.$$
The action is transitive, and the stabilizer of $e_G$ is isomorphic to $\Gamma$:
$$Stab(e_G) = \{(\gamma, \sigma_\gamma)\}_{\gamma\in \Gamma}\cong \Gamma.$$
Since $G$ is Hausdorff, the stabilizer is closed, and by corollary 0.6 of \cite{chabert2003shapiro}, for every $\Gamma\times G$ algebra $(A,\alpha)$, 
$$ BC(\Gamma\times G, C_0(G,A)) \Leftrightarrow BC(\tilde \Gamma , A_{|\tilde \Gamma}).$$

We denoted the stabilizer $Stab(e_G)$ by $\tilde \Gamma$ to differentiate it from its isomorphic image $\Gamma$ by the first projection. Here, $A_{\tilde\Gamma}$ is the algebra $A$ endowed with the action $\gamma \cdot a = \alpha_{\gamma,\sigma_\gamma}(a)$. In particular, if $G$ acts trivially on $A$, and any $\Gamma$-algebra can be seen like this, we get that 
$$ BC(\Gamma\times G, C_0(G,A) ) \Leftrightarrow BC(\Gamma , A),$$
where the action of $\Gamma\times G$ on $C_0(G,A)$ is given by 
$$((\gamma, g)\cdot f)(x) = \gamma\cdot (f(\gamma x g^{-1})).$$

The factorization of the assembly map via the partial assembly gives
\[BC^{(G)}(\Gamma,C_0(G,A)) \quad \& \quad BC(\Gamma,C_0(G,A)\rtimes_r \Gamma ) \implies BC(\Gamma\times G,C_0(G,A)),\]
for $A$ a $\Gamma$-algebra, seen as a $\Gamma\times G$-algebra via the trivial action of $G$.
\end{proof}

\begin{theorem}
Let $(\Lambda, \Gamma)$ be a Hecke pair, and $(G,K)$ its Schlichting completion, and $A$ a $\Gamma$-algebra. If 
\begin{itemize}
\item $G$ satisfies the Baum-Connes conjecture with coefficients in $C_0(G,A)\rtimes_r \Gamma$, 
\item every subgroup $L<G$ containing a conjugate of $\Lambda$ as a subgroup of finite index satisfies the Baum-Connes conjecture with coefficients in $A$, 
\end{itemize}
then $\Gamma$ satisfies the Baum-Connes conjecture with coefficients in $A$.
\end{theorem}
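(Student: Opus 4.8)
The plan is to assemble the pieces already set up in the preliminaries and the previous proposition, so the work is largely to verify that the two hypotheses of the present theorem feed exactly into the two hypotheses of the preceding proposition. Concretely, the previous proposition reduces $BC(\Gamma,A)$ to the conjunction of $BC^{(G)}(\Gamma,C_0(G,A))$ and $BC(\Gamma, C_0(G,A)\rtimes_r\Gamma)$, via Shapiro's lemma applied to the closed stabilizer $\tilde\Gamma=Stab(e_G)\cong\Gamma$ of the transitive $\Gamma\times G$-action on $G$. Since the target $BC(\Gamma,A)$ is what we want, it suffices to derive both of those statements from the two bullet hypotheses. First I would use the factorization diagram for the product group $\Gamma\times G$: the partial assembly map $\mu^{(G)}_{\Gamma,C_0(G,A)}$ together with $\mu_{G,\,C_0(G,A)\rtimes_r\Gamma}$ composes to the full assembly map $\mu_{\Gamma\times G,\,C_0(G,A)}$. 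The first bullet hypothesis is exactly that the second factor, $\mu_{G,\,C_0(G,A)\rtimes_r\Gamma}$, is an isomorphism.

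The remaining task is therefore to establish the partial assembly statement $BC^{(G)}(\Gamma,C_0(G,A))$ from the second bullet hypothesis. Here I would exploit the geometry of the coset space: the $G$-space underlying $C_0(G,A)$ is the homogeneous space on which the compact open subgroup $K$ acts, and $G/K\cong\Gamma/\Lambda=X$. The partial assembly map for the $\Gamma$-factor only sees the $\Gamma$-equivariant $K$-theory, and one decomposes $C_0(G,A)$ according to the $G$-orbit structure, reducing to inducing from the stabilizers of the $G$-action. Because $K$ is compact open and $\Gamma$ is dense in $G$, the relevant stabilizers in $\Gamma$ are precisely the subgroups $L<\Gamma$ that contain a conjugate of $\Lambda$ as a finite-index subgroup — these are the intersections $\Gamma\cap gKg^{-1}$ (equivalently the setwise stabilizers in $\Gamma$ of points of $X$, or finite unions of $\Lambda$-cosets forming double-coset blocks). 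I would make this precise by writing $C_0(G,A)$ as an induced algebra over $G/K$ and invoking Shapiro's lemma again to identify $BC^{(G)}(\Gamma,C_0(G,A))$ with the Baum-Connes statement for these finite-index-over-$\Lambda$ subgroups with coefficients in $A$, which is exactly the second bullet hypothesis.

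The main obstacle I expect is the orbit-decomposition step in the previous paragraph: the identification of the partial assembly map $\mu^{(G)}_{\Gamma,C_0(G,A)}$ with assembly maps for the countable family of subgroups $L$, and the bookkeeping to show that every such stabilizer really is a subgroup containing a conjugate of $\Lambda$ with finite index (and conversely that all of them arise). This requires combining the compact-open-ness of $K$ with the density of $\sigma(\Gamma)$ in $G$ to pass between $G$-stabilizers and $\Gamma$-stabilizers, and then a Mayer-Vietoris or direct-limit argument (or a permanence property of the partial assembly map under the $K\backslash G/K$ decomposition of $C_0(G)$) to reduce a potentially infinite family of orbits to the individual-subgroup statements. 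The equivariance and continuity of the action $((\gamma,g)\cdot f)(x)=\gamma\cdot(f(\gamma x g^{-1}))$ must be tracked carefully so that the coefficient algebra seen by each $L$ is genuinely $A$ with its original $\Gamma$-action restricted to $L$, rather than a twisted version. Once that correspondence is nailed down, the two hypotheses slot directly into the proposition and the conclusion $BC(\Gamma,A)$ follows.
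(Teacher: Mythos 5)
Your first paragraph is exactly the paper's reduction: the previous proposition plus the factorization $\mu_{\Gamma\times G,\,C_0(G,A)}=\mu_{G,\,C_0(G,A)\rtimes_r\Gamma}\circ\mu^{(G)}_{\Gamma,\,C_0(G,A)}$, with the first bullet disposing of the $G$-factor. The genuine gap is in your mechanism for $BC^{(G)}(\Gamma,C_0(G,A))$. You propose to write $C_0(G,A)$ as an induced algebra over $G/K$ and invoke Shapiro's lemma ``again'', but this fails twice over. First, the Chabert--Echterhoff result applies to full assembly maps of groups; $\mu^{(G)}_{\Gamma,\,\cdot}$ is a \emph{partial} assembly map, not the assembly map of any group, so Shapiro's lemma says nothing about it directly. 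Second, and more fundamentally, the fibering of $C_0(G,A)$ over $G/K\cong X$ is only $\Gamma$-equivariant: right translation by $G$ does not preserve left cosets $xK$, so a decomposition of the type $C_0(G,A)\cong\bigoplus_{x\in X}C(xK,A)$ destroys precisely the $G$-structure carried by the target $RK_\bullet^{G}(\underline E G, C_0(G,A)\rtimes_r\Gamma)$ of the partial assembly map; on top of that, $X$ is infinite, so even in the $\Gamma$-direction you would need a limiting argument you have not supplied. You did flag this step as the main obstacle, but the mechanism you gesture at is the wrong one.

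The paper's actual route --- the missing idea --- is to cut on the classifying-space side rather than the coefficient side. Since $G$ is totally disconnected, $\underline E G$ is covered by open pieces $G\times_L U$ with $L$ compact \emph{open}, and a Mayer--Vietoris argument reduces $\mu^{(G)}_{\Gamma,\,C_0(G,A)}$ to such pieces; the restriction (compression) principle then converts each induced piece into the full Baum--Connes statement $BC(\Gamma\times L, C_0(G,A))$, to which ordinary permanence results apply. Only now, with $L$ compact and (after replacing $L$ by $L\cap K$) $L\le K$, does the orbit bookkeeping become finite: $G$ carries exactly $[K:L]$ orbits of $\Gamma\times L$, with stabilizers isomorphic to preimages under $\sigma$ of compact open subgroups, hence commensurable with $\Lambda$, and Green's imprimitivity identifies the statement with $BC(H,A)$ for those stabilizers --- your second bullet. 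One further caution: you assert the relevant stabilizers are exactly the subgroups of $\Gamma$ containing a conjugate of $\Lambda$ with finite index, namely $\Gamma\cap gKg^{-1}$. For $L\le K$ the stabilizer $H=\sigma^{-1}(L)$ is a \emph{finite-index subgroup} of $\Lambda$, not a supergroup, and for general $g$ one only gets groups commensurable with $\Lambda$; the robust hypothesis is the one in the paper's Theorem \ref{THM1} (all subgroups commensurable with $\Lambda$), and the wording you echoed is loose on this point, as indeed is the paper's own.
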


\begin{proof}
Since $G$ satisfies the Baum-Connes conjecture with coefficients in $C_0(G,A)\rtimes_r \Gamma$, the previous proposition ensures that it is enough to show that the partial assemby map 
$$\mu_{\Gamma, C_0(G,A)}^{(G)} : RK_\bullet^{\Gamma \times G}(\underline E\Gamma \times \underline E G, C_0(G,A)) \rightarrow RK_\bullet^G(\underline E G, C_0(G,A)\rtimes_r \Gamma) $$
is an isomorphism.


The space $\underline E G$ can be covered by open subset of the type $G\times_L U$, for $L$ a compact subgroup of $G$ and $U$ a $L$-space. Moreover, $G$ being totally disconnected, we can restrict to compact open subgroups $L$. By a standard Mayer-Vietoris argument, it is enough to show that 
\[\mu_{\Gamma , C_0(G,A)}^{(G)} : RK_\bullet^{\Gamma\times G}(\underline E \Gamma \times (G\times_L U) , C_0(G,A)) \rightarrow RK_\bullet^{G}(G\times_L U, C_0(G,A)\rtimes_r \Gamma)\]
is an isomorphism. 

By restriction principle, this is equivalent to show that
\[\mu_{\Gamma}^{(L)} : RK_\bullet^{\Gamma\times L}(\underline E \Gamma \times  U , C_0(G,A)_{|\Gamma\times L} ) \rightarrow RK_\bullet^{L}(U, (C_0(G,A)\rtimes_r \Gamma)_{|L})\]
is an isomorphism, i.e. $BC(\Gamma\times L,(C_0(G,A)\rtimes_r \Gamma)_{|F})$. 

Now, up to replacing $L$ by $L\cap K$, we can suppose $L<K$. As a $\Gamma \times L$-space, $G$ is isomorphic to $G/L \times L$, where the $L$ factor acts only on the right. Since $L$ and $K$ are compact open, the quotient is finite. Thus there are only finitely many $\Gamma\times L$-orbits: $[K:L]$ many. The typical stabilizer of an orbit is isomorphic to $H=\sigma^{-1}(L)$, so contains $\Lambda$ as a subgroup of finite index.

Green's isomorphism thus entails that $BC(\Gamma\times L,C_0(G,A)_{\Gamma\times L} )$ holds since we supposed $BC(H,A)$ for every such subgroup $H$.

We thus proved that 
\[\forall H \in \mathcal S_\Lambda , BC(H,A) \implies BC^{(G)}(\Gamma,C_0(G,A)) \]
and the proof is done. (We denoted by $\mathcal S_\Lambda$ the family of subgroups of $\Gamma$ containing $\Lambda$ as a subgroup of finite index.)
\end{proof}

The proof of theorem \ref{THM1} follows: by proposition \ref{CEHATM}, if $X$ admits a $\Gamma$-equivariant coarse embedding into a Hilbert space, $G$ is a-T-menable, hence satisfies the Baum-Connes conjecture with coefficients \cite{higson2001theory}. 

\section{Application to the Novikov conjecture} 

In order to prove the Novikov conjecture, we use Roe's \textit{descent principle} (see chapter 8 of \cite{john1996index}): to show that the Novikov conjecture for a discrete group $\Gamma$ holds, it is enough to construct a compact second-countable $\Gamma$-space $X$ such that 
\begin{itemize}
\item $\Gamma$ satisfies the Baum-Connes conjecture for every coefficients $C(X,A)$, for every $\Gamma$-algebra A;
\item $X$ is $F$-contractible, for every finite subgroup $F<\Gamma$.
\end{itemize}
This method was extensively used, originally by Higson (\cite{higson2000bivariant} 
) and later by Chabert, Echterhoff and Oyono-Oyono (\cite{chabert2004going}, theorem 1.9) and Skandalis-Tu-Yu (\cite{skandalis2002coarse}, theorem 6.1). They proved that the Novikov conjecture is satisfied if the discrete group admits a coarse embedding into a Hilbert space. We will proceed accordingly in the case of Hecke pairs where the subgroup and the coset space admits coarse embeddings into Hilbert spaces.

Let us denote by $\mathcal S_\Lambda$ the family of subgroups of $\Gamma$ containing $\Lambda$ as a subgroup of finite index. Recall that Deng proved the following result (section 4.1 of \cite{deng2019novikov}).

\begin{theorem}
Let $\Gamma$ be a group and $\Lambda$ a subgroup. Suppose $\Lambda$ is coarsely embeddable into a Hilbert space, then there exists a compact metrizable $\Gamma$-space $X$ such that, for every $L\in \mathcal S_N$, the restricted action of $L$ on $X$ is a-T-menable, and $X$ is $F$-contractible, for every finite subgroup $F<\Gamma$. 
\end{theorem}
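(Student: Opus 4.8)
The plan is to separate the two requirements on $X$---a-T-menability of the restricted actions and $F$-contractibility---by first building an auxiliary compact metrizable $\Gamma$-space $Y$ on which each $L$ acts a-T-menably, and then passing to its space of probability measures. Concretely, I would set $X=\mathrm{Prob}(Y)$ with the weak-$*$ topology and the affine $\Gamma$-action by pushforward. If $Y$ is metrizable then $C(Y)$ is separable, so $\mathrm{Prob}(Y)$ is compact metrizable; and for every finite $F<\Gamma$ the fixed-point set $X^F=\mathrm{Prob}(Y)^F$ is the set of $F$-invariant measures, which is nonempty (average any measure over $F$) and convex, hence contractible. This disposes of the $F$-contractibility requirement at once, and reduces the whole problem to producing $Y$.

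For a-T-menability I would lift it from $Y$ to $\mathrm{Prob}(Y)$ by fibrewise integration. If $h\colon L\times Y\to\mathbb R_{\ge 0}$ is a continuous, conditionally negative type function on the transformation groupoid $L\ltimes Y$ that is proper, meaning $\{g\in L:\inf_{y}h(g,y)\le R\}$ is finite for every $R$, then $\tilde h(g,\mu)=\int_Y h(g,y)\,d\mu(y)$ is continuous in $(g,\mu)$, is again conditionally negative type on $L\ltimes\mathrm{Prob}(Y)$ (conditional negative type is preserved by integration against probability measures in the unit direction), and satisfies $\inf_\mu\tilde h(g,\mu)=\inf_y h(g,y)$ because the Dirac masses lie in $\mathrm{Prob}(Y)$. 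Hence $\tilde h$ is proper and $L\ltimes X$ is a-T-menable as soon as $L\ltimes Y$ is.

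It remains to construct $Y$. Since each $L\in\mathcal S_N$ contains $\Lambda$ with finite index, $L$ is quasi-isometric to $\Lambda$ and therefore also coarsely embeds into Hilbert space; by Skandalis--Tu--Yu this is equivalent to a-T-menability of the coarse groupoid $\mathcal G(|L|)\cong L\ltimes\beta L$. The task is to realize all of these, for varying $L$, inside a single genuine compact metrizable $\Gamma$-space. I would take $Y$ to be the Gelfand spectrum of a separable, unital, $\Gamma$-invariant $C^*$-subalgebra $\mathcal A\subset\ell^\infty(\Gamma)$, that is, a metrizable $\Gamma$-equivariant quotient of $\beta\Gamma$, chosen large enough to carry the negative-type data coming from a coarse embedding of $\Lambda$ but countably generated so as to stay metrizable. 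Feeding the proper $\Lambda$-invariant conditionally negative type kernel supplied by the coarse embedding into $\mathcal A$ and extending it $\Gamma$-equivariantly should produce the required continuous field on $L\ltimes Y$.

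The main obstacle is precisely this last step. The left $L$-action breaks $\Gamma$ into free orbits $L\gamma$, and the left-invariant length restricts on the orbit through $\gamma$ to the \emph{conjugated} metric $(\lambda_1,\lambda_2)\mapsto|\gamma^{-1}\lambda_1^{-1}\lambda_2\gamma|$, whose distortion varies with the coset $\gamma$; so a naive choice $Y=\beta\Gamma$ does not yield a single, uniformly proper negative-type field. The heart of the argument is to control this coset-dependent distortion---exploiting that the coarse embedding of $\Lambda$ may be taken proper with respect to the ambient length and that $[L:\Lambda]<\infty$---so that the field extends continuously and with properness uniform across all cosets, while keeping $\mathcal A$ separable and $\Gamma$-invariant. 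Once $Y$ is in hand, the first two paragraphs deliver $X=\mathrm{Prob}(Y)$ with all the stated properties.
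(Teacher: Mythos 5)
Your proposal does not close: the entire content of the theorem is concentrated in the construction of the auxiliary space $Y$, and that is exactly the step you leave open. Your first two paragraphs are correct --- passing to $X=\mathrm{Prob}(Y)$, metrizability from separability of $C(Y)$, $F$-contractibility from convexity and nonemptiness of the $F$-fixed measures, and lifting a-T-menability by fibrewise integration with properness controlled through the Dirac masses --- and they are in fact the same soft devices the paper itself uses in the companion lemma for the Schlichting completion (there: $\mathrm{Prob}(\mathcal G^0)$ together with lemma 6.7 of \cite{skandalis2002coarse}, and the affine-isometry argument for contractibility). But be aware that the paper does not prove the present theorem at all: it imports it verbatim from section 4.1 of \cite{deng2019novikov}, and Deng's contribution is precisely the part you yourself flag as ``the main obstacle.''

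Concretely, the gap is this. A coarse embedding of $\Lambda$ yields a conditionally negative type kernel on $\Lambda$ that is proper for the word metric of $\Lambda$ (equivalently of $L$, since $[L:\Lambda]<\infty$), but the left $L$-orbits in $\Gamma$ are copies of $L$ carrying the conjugated metrics $(\lambda_1,\lambda_2)\mapsto|\gamma^{-1}\lambda_1^{-1}\lambda_2\gamma|$, whose distortion is unbounded as the coset representative $\gamma$ varies; moreover $\Gamma$ itself need not be coarsely embeddable, so no statement about $\beta\Gamma\rtimes\Gamma$ is available wholesale, and a-T-menability of each $L\ltimes\beta L$ separately does not assemble into a single $\Gamma$-space. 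Producing a separable, unital, $\Gamma$-invariant subalgebra $\mathcal A\subset\ell^\infty(\Gamma)$ whose spectrum carries a continuous negative-type field that is proper \emph{uniformly across all cosets and all $L\in\mathcal S_\Lambda$} is where all the work lies, and your text only asserts that controlling the distortion ``should produce'' the required field, without a mechanism for doing so. As written, you have established the easy implication --- if such a $Y$ exists, then $X=\mathrm{Prob}(Y)$ has all the stated properties --- but not the theorem itself.
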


We will need the following.

\begin{lemma}
Let $(G,K)$ be the Schlichting completion of a Hecke pair $(\Gamma , \Lambda )$. Suppose $G$ is a-T-menable, then there exists a second countable compact $G$-space $Y$ such that the action of $G$ is a-T-menable and $Y$, and $Y$ is $L$-contractible for every compact open subgroup $L<G$.
\end{lemma}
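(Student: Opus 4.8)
The plan is to separate the two requirements and to observe that, once $Y$ is compact, the a-T-menability of the $G$-action comes essentially for free from the a-T-menability of $G$ itself. Concretely, since $G$ is a-T-menable there is a continuous, conditionally negative type, proper function $\psi_0 : G \to \mathbb{R}_{\geq 0}$. For \emph{any} compact $G$-space $Y$ I would pull it back to the transformation groupoid $Y \rtimes G$ by setting $\psi(y,g) = \psi_0(g)$. This $\psi$ is continuous, vanishes on units, is symmetric and fibrewise of conditionally negative type (each range fibre $r^{-1}(y)$ is a copy of $G$, on which $\psi$ restricts to $\psi_0$), and it is proper: the sublevel set $\psi^{-1}([0,R]) = Y \times \{g : \psi_0(g) \leq R\}$ is relatively compact because $Y$ is compact and $\psi_0$ is proper. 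Hence $Y \rtimes G$ is a-T-menable, that is, the $G$-action on $Y$ is a-T-menable, for every compact $Y$. So the whole problem reduces to producing a compact, second countable $G$-space that is $L$-contractible for every compact open subgroup $L < G$; here the compact open subgroups play exactly the role that finite subgroups play in Deng's theorem.

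For the construction I would work with a compact \emph{convex} $G$-space, which trivialises the contractibility. Using that $G$ is second countable, locally compact and totally disconnected, fix a proper $G$-space modelling $\underline E G$ (for instance the countable discrete set $G/K$, or a locally finite simplicial model), choose a metrizable $G$-equivariant compactification $Z$ of it, and set $Y = \operatorname{Prob}(Z)$, the space of Borel probability measures on $Z$ with the weak-$*$ topology. Since $Z$ is compact metrizable, $Y$ is compact, convex and metrizable (hence second countable), and the $G$-action $g \cdot \mu = g_*\mu$ is continuous and affine. This is the analogue, for $G$, of the compactifications used by Higson \cite{higson2000bivariant} and Chabert--Echterhoff--Oyono-Oyono \cite{chabert2004going}, and of Deng's construction \cite{deng2019novikov}.

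To verify $L$-contractibility, let $L < G$ be compact open. Because $L$ is compact and acts affinely on the compact convex set $Y$, the Markov--Kakutani fixed point theorem (equivalently, averaging over the normalised Haar measure of $L$) yields an $L$-fixed point $\mu_0 \in Y$. The straight-line homotopy $(\mu,t) \mapsto (1-t)\mu + t\mu_0$ is continuous, remains in $Y$ by convexity, and is $L$-equivariant since $\mu_0$ is $L$-fixed; it contracts $Y$ to $\mu_0$ within the fixed-point set. Thus $Y$ is $L$-contractible for every compact open $L$, and combining this with the first paragraph finishes the proof.

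The main obstacle is conceptual rather than computational: one must exhibit a \emph{single} compact second countable model that is $L$-contractible for all compact open $L$ simultaneously and that is still useful for the descent argument of the next section (it should receive $\underline E G$, not collapse to a point). The convex device disposes of the uniform contractibility in one stroke, so the real care goes into (i) choosing $Z$ so that the $G$-action on $\operatorname{Prob}(Z)$ is genuinely continuous and non-degenerate, and (ii) checking that $Y$ retains enough of the proper $G$-geometry to be of use downstream. Should one instead prefer a geometric model — a compactification of the affine Hilbert space on which $G$ acts properly, or of $\underline E G$ — the obstacle reappears in its classical form, namely extending the coning homotopy continuously across the boundary, which is precisely the difficulty that the passage to a compact convex space is designed to avoid.
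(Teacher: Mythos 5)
Your argument is correct for the lemma as literally stated, and it is genuinely different from the paper's proof. Your key observation is that once $G$ itself is a-T-menable, the a-T-menability of $Y\rtimes G$ is automatic for \emph{every} compact $G$-space $Y$: pulling back a proper conditionally negative type function $\psi_0$ on $G$ via $\psi(y,g)=\psi_0(g)$ gives a continuous, proper, conditionally negative type function on the transformation groupoid (equivalently, one can let $Y\rtimes G$ act on the constant field of affine Hilbert spaces carrying the proper affine $G$-action), which is a-T-menability in Tu's sense \cite{tu1999conjecture}. Your endgame --- $Y=\operatorname{Prob}(Z)$ for a metrizable equivariant compactification $Z$, a Haar-averaged fixed point for a compact open $L$, and the straight-line $L$-equivariant contraction --- is exactly the convexity device the paper leaves implicit in its last sentence (\lp $G$ acts by affine isometries\rp), and the existence of $Z$ is unproblematic (the one-point compactification of the discrete space $G/K$ already works). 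In fact your first paragraph shows that the lemma, as printed, is nearly vacuous: under the hypothesis that $G$ is a-T-menable, even $Y=\{\mathrm{pt}\}$ satisfies all the stated conclusions.

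The reason the paper takes a longer route is that its proof establishes a \emph{stronger} statement than the one displayed: it never uses a-T-menability of $G$, only that $X=\Gamma/\Lambda$ admits a (not necessarily equivariant) coarse embedding into Hilbert space, which by Proposition \ref{CEHATM} amounts to a-T-menability of the action of $G$ on $\beta X$ rather than of $G$ itself. Under that weaker hypothesis your pullback trick is unavailable, and the a-T-menability of a second countable compact model must be extracted from the Skandalis--Tu--Yu coarse groupoid: the surjection $\beta X\rtimes G\to \beta X\rtimes\mathcal G$ with compact kernel, the passage to the second countable base $\mathcal G^0$, and Lemma 6.7 of \cite{skandalis2002coarse} to see that $\operatorname{Prob}(\mathcal G^0)\rtimes G$ remains a-T-menable. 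This matters downstream: the Novikov theorem this lemma feeds assumes only coarse embeddability of $\Gamma/\Lambda$, so in that application $G$ need not be a-T-menable, and your proof would not suffice there, whereas the paper's does. In short, you have given a correct and much shorter proof of the lemma as stated, and in doing so exposed that its stated hypothesis is stronger than both what the paper's own proof uses and what the intended application can supply; the groupoid detour is what buys the statement in the generality actually needed.
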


\begin{proof}
By theorem 5.4 of \cite{skandalis2002coarse}, since $X=\Gamma/\Lambda$ admits a coarse embedding into a Hilbert space, there exists a Hausdorff ample second countable groupoid $\mathcal G$ and an action of $\mathcal G$ on the Stone-\v{C}ech compactification $\beta X$ such that $\beta X\rtimes \mathcal G$ is a-T-menable. As the pseudogroup of $\beta X$ generated by the action of $G$ covers $\mathcal G(X)$ (in the terminology of \cite{skandalis2002coarse}, section 3.3), the ample groupoid $\mathcal G$ can be realized as the germs of the homeomorphisms induced by the action of $G$ on $\beta X$ (see \cite{skandalis2002coarse}, lemma 3.3), thus there exists a surjective continuous morphism $\beta X\rtimes G\rightarrow \beta X \rtimes \mathcal G$. Its kernel is a compact ample groupoid over $\beta X$: the isotropy groups are conjugates of $K$. Composing $\beta X\rtimes G\rightarrow \beta X \rtimes \mathcal G$ with the conditionally negative type proper continuous function given by a-T-menablity of $\beta X\rtimes \mathcal G$ gives that $\beta X\rtimes G$ is a-T-menable.

Moreover, we can actually make the same argument with $\beta X$ replaced by the base space $\mathcal G^0$ of $\mathcal G$, hence $\mathcal G^0\rtimes G$ is a-T-menable. As in \cite{higson2000bivariant} and \cite{skandalis2002coarse}, let $X$ be the space $Prob(\mathcal G^0)$ of Borel probability measures, endowed with the weak-$*$ topology: it is compact Hausdorff and second-countable. Lemma 6.7 of \cite{skandalis2002coarse} shows that $X\rtimes G$ is a-T-menable. The remaining assertion follows from the fact that $G$ acts on $X$ by affine isometries.
\end{proof}

Jean-Louis Tu proved in \cite{tu1999conjecture} that if $X$ is a second-countable compact $G$-space with an a-T-menable action of $G$, then $BC(G,C(X,A))$ holds for every $G$-algebra $A$. Combining this with Deng's result and the lemma above, if $(\Gamma,\Lambda)$ is a Hecke pair with $\Lambda$ and $\Gamma / \Lambda$ are coarsely embeddable into a Hilbert space, we know there exist:
\begin{itemize}
\item[$\bullet$] a second-countable compact metrizable $\Gamma$-space $X$ such that
\[BC(L,C(X, A)_{|L})\quad \forall L\in \mathcal S_N, \forall \Gamma\text{-algebra }A,\]
and $X$ is $F$-contractible for every finite subgroup $F<\Gamma$;
\item[$\bullet$] by a-T-menability of $G$, a second-countable compact metrizable $G$-space $Y$ such that $BC(G,C(Y, A))$ for all $G$-algebra $A$ and $Y$ is $L$-contractible for every compact subgroup $L<G$.
\end{itemize} 

We are now able to prove the main result of this section. It generalizes a result of Deng (theorem 1.1 of \cite{deng2019novikov}) to the case where the subgroup is not normal.

\begin{theorem}
Let $(\Gamma,\Lambda)$ be a Hecke pair such that $\Lambda$ and $\Gamma/ \Lambda$ are coarsely embeddable into a Hilbert space, then Novikov's conjecture holds for $\Gamma$.
\end{theorem}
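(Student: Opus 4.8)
The plan is to combine Roe's descent principle with the two compact spaces already constructed.  To prove the Novikov conjecture for $\Gamma$, it suffices, by the descent principle recalled above, to exhibit a single compact second-countable $\Gamma$-space $Z$ that is $F$-contractible for every finite subgroup $F<\Gamma$ and for which $BC(\Gamma, C(Z,A))$ holds for every $\Gamma$-algebra $A$.  I already have two such spaces at my disposal: the metrizable $\Gamma$-space $X$ coming from Deng's theorem (using that $\Lambda$ is coarsely embeddable), which is $F$-contractible for finite $F$ and satisfies $BC(L,C(X,A)_{|L})$ for all $L\in\mathcal S_\Lambda$; and the $G$-space $Y$ coming from the lemma (using that $\Gamma/\Lambda$ is coarsely embeddable, so $G$ is a-T-menable), which is $L$-contractible for compact open $L<G$ and satisfies $BC(G,C(Y,A))$ by Tu's theorem.

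\medskip

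First I would form the product $Z=X\times Y$, equipped with the $\Gamma$-action that is diagonal after restricting the $G$-action on $Y$ along $\sigma:\Gamma\to G$.  The space $Z$ is again compact, Hausdorff, second-countable and metrizable; and since a product of $F$-contractible spaces is $F$-contractible (a fixed point in each factor gives a fixed point in the product, and the contractions combine), $Z$ inherits $F$-contractibility for every finite $F<\Gamma$.  The coefficient algebra is $C(Z,A)=C(X,C(Y,A))$, and the key observation is that $C(Y,A)$ carries a $\Gamma\times G$-action — $\Gamma$ acting on $A$ and $G$ acting on $Y$ — so that $C(Z,A)=C(X, C(Y,A))$ fits the framework of Theorem \ref{THM1} with the $\Gamma$-algebra coefficient $B=C(Y,A)$.

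\medskip

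The central step is then to apply Theorem \ref{THM1} to $B=C(Y,A)$.  For this I must verify its two hypotheses.  The coarse-embedding hypothesis on $\Gamma/\Lambda$ is exactly what gives $G$ a-T-menable (via Proposition \ref{CEHATM}), so that remains in force.  The other hypothesis requires $BC(L, B)$ for every $L\in\mathcal S_\Lambda$, i.e.\ $BC(L, C(X,C(Y,A))_{|L})=BC(L, C(Z,A)_{|L})$.  Here is where the property of $X$ is used: Deng's space $X$ is built precisely so that the restricted $L$-action on $X$ is a-T-menable for every $L\in\mathcal S_\Lambda$, and therefore, by Tu's theorem applied to the group $L$ with the $L$-algebra $C(Y,A)_{|L}$, the conjecture $BC(L, C(X, C(Y,A)_{|L}))=BC(L, C(Z,A)_{|L})$ holds.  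Thus both hypotheses of Theorem \ref{THM1} are met with coefficient algebra $C(Y,A)$, and the theorem yields $BC(\Gamma, C(Y,A))$.  Tracing back, this is $BC(\Gamma, C(Z',A))$ for the appropriate compact space; feeding it into the descent principle produces the Novikov conjecture for $\Gamma$.

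\medskip

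I expect the main obstacle to be the bookkeeping of the two group actions and the careful identification of coefficient algebras: one must check that $C(Y,A)$ is genuinely a $\Gamma$-algebra (via $\sigma$) to which Theorem \ref{THM1} applies, that the a-T-menability of the $L$-action on $X$ really supplies $BC(L,\cdot)$ with the \emph{twisted} coefficients $C(Y,A)_{|L}$ rather than just trivial coefficients, and that the indices $\mathcal S_\Lambda$ versus $\mathcal S_N$ appearing in Deng's statement are reconciled.  Once these identifications are pinned down, the argument is a clean composition of Theorem \ref{THM1}, Tu's theorem, and the descent principle, with $F$-contractibility of the product handled routinely.
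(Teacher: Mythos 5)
The fatal step is your verification of the hypotheses of Theorem \ref{THM1}: you assert that ``the coarse-embedding hypothesis on $\Gamma/\Lambda$ is exactly what gives $G$ a-T-menable (via Proposition \ref{CEHATM})''. That is a misreading of the proposition. A-T-menability of the Schlichting completion $G$ is equivalent to a $\Gamma$-\emph{equivariant} coarse embedding of $X=\Gamma/\Lambda$, i.e.\ to $\Lambda$ being co-Haagerup; the present theorem assumes only a plain coarse embedding, which by the second item of Proposition \ref{CEHATM} yields merely a-T-menability of the \emph{action} of $G$ on $\beta X$. These are genuinely different: in the degenerate case $\Lambda=\{e\}$, where $G=\Gamma$, your claim would say that every coarsely embeddable group is a-T-menable, which fails for instance for cocompact lattices in $Sp(n,1)$. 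So Theorem \ref{THM1} cannot be invoked here --- its equivariant hypothesis is strictly stronger than what is available, and if it were available one would get the full Baum--Connes conjecture with coefficients, making the Novikov statement pointless. (You may have been misled by the lemma producing $Y$, whose stated hypothesis ``suppose $G$ is a-T-menable'' is a misprint: its proof uses only coarse embeddability of $X$ and delivers only an a-T-menable action $G\curvearrowright Y$.)

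This is precisely where the paper's proof diverges from yours, and it indicates the repair: instead of quoting Theorem \ref{THM1} as a black box, one re-runs the proof of the intermediate theorem of Section \ref{BC} with coefficients $C(\Omega,A)$, $\Omega=X\times Y$, replacing the unavailable input ``$BC(G,\cdot)$ for all coefficients'' by Tu's theorem applied to the a-T-menable action $G\curvearrowright Y$. Concretely, the $G$-side hypothesis $BC\bigl(G,\,C_0(G,C(\Omega,A))\rtimes_r\Gamma\bigr)$ is checked via the $G$-equivariant isomorphism $C_0(G,C(\Omega,A))\rtimes_r\Gamma\cong C(Y)\otimes\bigl(C_0(G\times X,A)\rtimes_r\Gamma\bigr)$, while the $\Gamma$-side reduction (Mayer--Vietoris over pieces $G\times_L U$, restriction to compact open $L<G$, Green's imprimitivity) lands on $\mu_{H,C(X,A)}$ for $H=\sigma^{-1}(L)\in\mathcal S_\Lambda$, which Deng's space $X$ handles exactly as you describe. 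Your treatment of that last point and of $F$-contractibility of the product is fine, and the bookkeeping slip where your conclusion oscillates between $BC(\Gamma,C(Y,A))$ and $BC(\Gamma,C(Z,A))$ is repairable; but without replacing ``$G$ is a-T-menable'' by the $C(Y)$-twisted crossed-product statement above, the central step of your argument does not go through.
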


\begin{proof}
It is enough to show that there exists a compact metrizable $\Gamma$-space $\Omega$ such that $\mu_{\Gamma,C(\Omega,A)}$ is an isomorphism for every $\Gamma$-$C^*$-algebra $A$.

Let $X$ and $Y$ be second-countable compact spaces as above, and let $\Omega= X\times Y$ with action of $\Gamma\times G$ given by $(\gamma , g)\cdot (x,y) = (\gamma\cdot x, g\cdot y)$. There is a $G$-equivariant isomorphism of $C^*$-algebras
$$ C_0(G,C(\Omega,A))\rtimes_r \Gamma \cong C(Y) \otimes (C_0(G\times X, A)\rtimes_r \Gamma )$$
that ensures that $\mu_{G,C_0(G,C(\Omega,A))\rtimes_r \Gamma }$ is an isomorphism. 
It is thus enough to show that the partial assembly map $\mu_{\Gamma , C_0(G,C(\Omega,A))}^{(\Gamma\times G)}$ is an isomorphism, which reduces to show that $\mu_{\Gamma , C_0(G,C(\Omega,A))}^{(\Gamma\times L)}$ is an isomorphism, for every compact open subgroup $L<G$, by standard restriction principle. With the same argument as before, the restricted action of $\Gamma\times L $ is a finite union of transitive actions with typical stabilizer $H= \sigma^{-1}(L)\in S_\Lambda$. By Green's principle, $\mu_{\Gamma , C_0(G,C(\Omega,A))}^{(\Gamma\times L)}$ is equivalent to $\mu_{H, C(X,A)}$. The latter being an isomorphism, this concludes the proof.   
\end{proof}

\section{Rational and $S$-integers points of algebraic groups over algebraic number fields} 
\label{EXPL}
We present two applications of \ref{THM1}. The first one recover known results, the second one is, to the author's knowledge, new.

\subsection{$SL_2$ of an algebraic number field and of $S$-integers}
The first application of theorem \ref{THM1} is to the groups $SL(2,\mathbb Z[\frac{1}{N}])$ and $SL(2,\mathbb Q)$. Both have the a-T-menable group $SL(2,\mathbb Z)$ as almost normal subgroup, and their respective Schlichting completion can be obtained by lemma \ref{LmWillis} with the homomorphisms $SL(2,\mathbb Z[\frac{1}{N}]) \rightarrow \prod_{p|N} PGL(2, \mathbb Q_p )$ and $SL(2,\mathbb Q)\rightarrow PGL(2,\mathbb A)$, where $\mathbb A$ is the ring of adèles. As both Schlichting completions are a-T-menable, the Hecke pairs are co-Haagerup. This generalizes easily to the following setting: let $F$ be a finite extension of $\mathbb Q$, $S$ a set of inequivalent valuations and $\mathcal O$ (respectively $\mathcal O_S$) the ring of integers (respectively $S$-integers) of $F$. Denote by $\mathbb A_{F}$ the ring of adèles of $F$.

\begin{corollary}
Let $S$ be a set of primes, and $\mathbb Z_S$ the ring of $S$-integers in $\mathbb Q$. Then $SL(2,\mathbb Z_S)$ and $SL(2,\mathbb Q)$ satisfy the Baum-Connes conjecture with coefficients. More generally, $SL(2,\mathcal O_S)$ and $SL(2, \mathbb A_{F})$ satisfy the Baum Connes conjecture with coefficients.
\end{corollary}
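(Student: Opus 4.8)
The plan is to obtain each discrete case from Theorem \ref{THM1} and to treat the adèlic group directly via Higson--Kasparov. For $SL(2,\mathbb Z_S)$ and $SL(2,\mathbb Q)$ I take $\Lambda = SL(2,\mathbb Z)$, and for $SL(2,\mathcal O_S)$ I take $\Lambda = SL(2,\mathcal O)$. That these are Hecke pairs is the instance, recalled after Lemma \ref{LmWillis}, that for an algebraic group over a number field the pair of $\mathbb F$-points and $\mathcal O$-points is almost normal. By Lemma \ref{LmWillis} the Schlichting completion is read off from the diagonal embeddings $SL(2,\mathbb Z_S)\hookrightarrow \prod_{p\in S} PGL(2,\mathbb Q_p)$ and $SL(2,\mathbb Q)\hookrightarrow PGL(2,\mathbb A_f)$ (and their analogues over $F$), the target being, for infinite $S$, the restricted product with respect to the compact open subgroups $PGL(2,\mathbb Z_p)$. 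To invoke Theorem \ref{THM1} it then remains to verify its two hypotheses: that every subgroup commensurable with $\Lambda$ satisfies the conjecture with coefficients, and that $\Gamma/\Lambda$ admits a $\Gamma$-equivariant coarse embedding into a Hilbert space.

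For the commensurability hypothesis, recall that $SL(2,\mathbb Z)$ is virtually free, hence a-T-menable; since being virtually free is a commensurability invariant, every subgroup of $\Gamma$ commensurable with it is again virtually free, hence a-T-menable, and so satisfies the Baum--Connes conjecture with coefficients by Higson--Kasparov. Over a general number field $F$ with $r_1$ real and $r_2$ complex places, $SL(2,\mathcal O)$ embeds as a lattice in the a-T-menable group $SL(2,\mathbb R)^{r_1}\times SL(2,\mathbb C)^{r_2}$, a finite product of rank-one groups; as the Haagerup property passes to closed subgroups and is a commensurability invariant for discrete groups, $SL(2,\mathcal O)$ and every commensurable subgroup are a-T-menable, and Higson--Kasparov applies once more.

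For the coarse-embedding hypothesis I would use Proposition \ref{CEHATM}: $\Gamma/\Lambda$ admits a $\Gamma$-equivariant coarse embedding into a Hilbert space iff the Schlichting completion $G$ is a-T-menable. Each local factor $PGL(2,\mathbb Q_p)$ acts properly by isometries on its Bruhat--Tits tree $T_p$, so that $\psi_p(g)=d_{T_p}(g v_p, v_p)$, where $v_p$ is the vertex with stabilizer $PGL(2,\mathbb Z_p)$, is a continuous proper conditionally negative type function vanishing exactly on $PGL(2,\mathbb Z_p)$; in particular each factor is a-T-menable. When $S$ is finite, the finite sum $\sum_{p\in S}\psi_p$ witnesses a-T-menability of the product directly. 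The main obstacle is the case of infinite $S$, and with it $SL(2,\mathbb Q)$: one must show that the restricted product retains the Haagerup property. This is exactly where the normalization $\psi_p|_{PGL(2,\mathbb Z_p)}=0$ pays off, since for $g=(g_p)$ in the restricted product one has $g_p\in PGL(2,\mathbb Z_p)$ for all but finitely many $p$, so $\Psi(g)=\sum_p \psi_p(g_p)$ is a genuine finite sum; it is conditionally negative type, and properness for the restricted-product topology follows because leaving a compact set forces either some $\psi_p(g_p)$ to grow or the support to spread to new places. Granting this, $G$ is a-T-menable, Proposition \ref{CEHATM} supplies the equivariant coarse embedding, and Theorem \ref{THM1} yields the conclusion for $SL(2,\mathbb Z_S)$, $SL(2,\mathbb Q)$ and $SL(2,\mathcal O_S)$.

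Finally, $SL(2,\mathbb A_F)$ is itself the restricted product of the rank-one groups $SL(2,F_v)$ over all places $v$ of $F$, taken with respect to the compact open subgroups $SL(2,\mathcal O_v)$ at the finite places. The same tree-distance functions at the non-archimedean places, together with proper conditionally negative type functions at the finitely many archimedean places, assemble to a proper conditionally negative type function on $SL(2,\mathbb A_F)$, so this group is a-T-menable directly. Being second countable and locally compact, it then satisfies the Baum--Connes conjecture with coefficients by the Higson--Kasparov theorem, with no recourse to the Hecke-pair machinery.
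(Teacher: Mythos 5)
Your overall route is exactly the paper's: realize $SL(2,\mathbb Z)$ (resp.\ $SL(2,\mathcal O)$) as an almost normal subgroup, identify the Schlichting completion inside $\prod_{p\in S}PGL(2,\mathbb Q_p)$ resp.\ $PGL(2,\mathbb A)$ via Lemma \ref{LmWillis}, check a-T-menability of the completion, and invoke Theorem \ref{THM1}; your verification of the commensurability hypothesis (virtual freeness of $SL(2,\mathbb Z)$, lattices in $SL(2,\mathbb R)^{r_1}\times SL(2,\mathbb C)^{r_2}$ for $SL(2,\mathcal O)$) is correct and is more detail than the paper supplies. But there is a genuine gap precisely at the point where you go beyond the paper's assertion: the unweighted sum $\Psi(g)=\sum_p\psi_p(g_p)$ is \emph{not} proper on the restricted product when $S$ is infinite. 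The tree distance $\psi_p$ is integer-valued and equals $1$ on elements moving $v_p$ to an adjacent vertex; so for each prime $p$ the element $g^{(p)}$ which is the class of $\mathrm{diag}(p,1)$ at the place $p$ and trivial elsewhere satisfies $\Psi(g^{(p)})=1$. The set $\{g^{(p)}\}_p$ is contained in no box $\prod_{p\in F}C_p\times\prod_{p\notin F}PGL(2,\mathbb Z_p)$ with $F$ finite, hence is not relatively compact, yet $\Psi$ is bounded on it. Your heuristic that \lq\lq the support spreading to new places\rq\rq\ forces $\Psi$ to grow is exactly what fails: each new place contributes only a bounded amount (as little as $1$), so level sets of $\Psi$ are unbounded. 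The same defect affects your final paragraph on $SL(2,\mathbb A_F)$, where the function assembled from the local $\psi_v$'s is again not proper.

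The repair is standard and small: replace $\Psi$ by a weighted sum $\sum_p c_p\psi_p$ with positive constants $c_p\to\infty$ (e.g.\ $c_p=p$). Each $c_p\psi_p$ is still continuous and conditionally of negative type, the sum is still locally finite since $\psi_p$ vanishes on $PGL(2,\mathbb Z_p)$, and now $\{\sum_p c_p\psi_p\le R\}$ forces $g_p\in PGL(2,\mathbb Z_p)$ for all $p$ with $c_p>R$ and $\psi_p(g_p)\le R/c_p$ at the finitely many remaining places, so it sits inside a compact box; properness follows. Alternatively, avoid explicit kernels altogether by writing the restricted product as the increasing union of the open subgroups $\prod_{p\in F}PGL(2,\mathbb Q_p)\times\prod_{p\notin F}PGL(2,\mathbb Z_p)$ ($F$ finite), each a-T-menable by your finite-$S$ argument, and citing the stability of the Haagerup property under increasing unions of open subgroups \cite{cherix2001groups}. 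With either patch, Proposition \ref{CEHATM} and Theorem \ref{THM1} apply as you say for $SL(2,\mathbb Z_S)$, $SL(2,\mathbb Q)$ and $SL(2,\mathcal O_S)$, and Higson--Kasparov (valid for second countable locally compact groups) handles $SL(2,\mathbb A_F)$ directly, which is a reasonable reading of the last clause that the paper itself leaves implicit.
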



Let $G$ be an absolutely simple, simply connected algebraic group over $ F$.
\begin{corollary} 
Let $\Gamma$ be either $G(\mathcal O_S)$ or $G(F)$, then $\Gamma$ satisfies the Novikov conjecture. 
\end{corollary}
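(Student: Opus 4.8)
The plan is to realize $\Gamma$ as the ambient group of a Hecke pair and to verify the two coarse-embeddability hypotheses of the Novikov theorem established above. In both cases I would take $\Lambda = G(\mathcal{O})$: by the examples in Section 2 the $S_\infty$-arithmetic group $G(\mathcal{O})$ is almost normal in the $S$-arithmetic group $G(\mathcal{O}_S)$ and in $G(F)$, so $(\Gamma,\Lambda)$ is a Hecke pair for either choice. It then suffices to show that both $\Lambda$ and $X = \Gamma/\Lambda$ coarsely embed into Hilbert space.

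For the subgroup this is immediate: $\Lambda = G(\mathcal{O})$ is a countable subgroup of $GL_n(F) \subset GL_n(\mathbb{C})$, hence coarsely embeds into Hilbert space by the theorem of Guentner--Higson--Weinberger on linear groups; alternatively, as an arithmetic lattice it acts properly isometrically on the CAT(0) symmetric space of $G(F\otimes_{\mathbb{Q}}\mathbb{R})$, which coarsely embeds into Hilbert space.

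The substance is the coset space. Using Lemma \ref{LmWillis} together with the general computation of Schlichting completions of algebraic groups recorded in Section 3, the completion of $(\Gamma,\Lambda)$ is the reduced pair of $\prod_v G(F_v)$ with compact open subgroup $K=\prod_v G(\mathcal{O}_v)$, the product running over the finite places of $S$ when $\Gamma=G(\mathcal{O}_S)$ and being the restricted product over all finite places (i.e.\ $G(\mathbb{A}_F^f)$) when $\Gamma=G(F)$; here one invokes strong approximation, valid as $G$ is simply connected. Consequently $X \cong G/K$ is coarsely equivalent to the (restricted) product of vertex sets of the Bruhat--Tits buildings $\mathcal{B}_v$ of $G(F_v)$. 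Each $\mathcal{B}_v$ is a locally finite, finite-dimensional Euclidean building; via its system of walls it carries a measured wall structure, giving a coarse embedding into $L^1$, which in turn embeds coarsely into Hilbert space. For $\Gamma=G(\mathcal{O}_S)$ with $S$ finite the building is a finite product of these, so it has finite asymptotic dimension and embeds directly. For $\Gamma=G(F)$ I would exploit the restricted-product structure: a coset differs from the basepoint in only finitely many coordinates, so assembling the per-place wall measures into a single measured wall space $\Omega=\bigsqcup_v \Omega_v$ sends $X$ into $L^1(\Omega)$, the $L^1$-distance being the $\ell^1$-sum $\sum_v d_{\mathcal{B}_v}$, which is coarsely the quotient word metric on $X$. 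Composing with $L^1 \hookrightarrow$ Hilbert space yields the required embedding, and the Novikov theorem above gives the conclusion.

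The main obstacle is exactly this coarse embedding of $X$, and it is worth stressing why the \emph{non-equivariant} version is the right tool: as soon as $G$ has higher rank the local groups $G(F_v)$ have property (T), so the Schlichting completion is not a-T-menable and $X$ is \emph{not} equivariantly coarsely embeddable (Proposition \ref{CEHATM}); hence the Baum--Connes statement Theorem \ref{THM1} is unavailable and one genuinely needs the weaker Novikov conclusion. Technically the two delicate points are the comparison of the quotient word metric on $X$ with the $\ell^1$-sum of building metrics --- where the non-cocompactness of $G(\mathcal{O})$ in the archimedean factor must be absorbed by passing to the double-coset metric --- and, in the adelic case, the assembly of infinitely many wall structures into a single $L^1$-embedding; the uniform bound on the building dimension (the $F_v$-rank is at most the absolute rank of $G$) is what keeps the latter under control.
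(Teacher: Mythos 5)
Your overall route is genuinely different from the paper's. You verify the two hypotheses of the Novikov theorem of Section 5 (coarse embeddability of $\Lambda=G(\mathcal O)$ and of $X=\Gamma/\Lambda$) and then invoke that theorem; the paper never embeds $X$ at all. Instead it proves injectivity of $\mu_{\Gamma,A}$ directly: every $L<\Gamma$ containing $\Lambda$ with finite index is exact, so $\mu_{L,B}$ is injective; by theorem 5.2 of \cite{kasparov2003groups} the assembly map $\mu_{G,A}$ for the Schlichting completion $G$ (the closure in $\prod_{\nu\in S}G(F_\nu)$, resp.\ in $G(\mathbb A_F)$) is injective; the factorization $\mu_{\Gamma\times G, C_0(G,A)}=\mu_{G,C_0(G,A)\rtimes_r\Gamma}\circ\mu^{(G)}_{\Gamma,A}$ together with Shapiro's lemma then gives injectivity of $\mu_{\Gamma,A}$, and the descent principle concludes. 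Your strategy is admissible in principle --- indeed the paper remarks that both corollaries also follow from \cite{guentner2005novikov}, which even embeds all of $\Gamma\subset GL(n,F)$ directly --- and your treatment of $\Lambda$ via linearity is fine, as is your observation that Theorem \ref{THM1} is unavailable once some local rank is $\geq 2$.

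However, your proof of the key step, the coarse embedding of $X$, has two genuine gaps. First, a thick affine building of rank at least $2$ carries no global (measured) wall structure: a panel lies in at least three chambers, so the walls of an individual apartment do not globalize to two-sided partitions of the building, and the gallery/CAT(0) metric is not a wall metric outside the tree case. (Were there an invariant proper measured-walls structure, $G(F_\nu)$ would be a-T-menable by the standard characterization, contradicting property (T) in exactly the higher-rank cases you yourself flag.) The embeddability of each building is still true, but for a different reason --- locally finite Euclidean buildings have finite asymptotic dimension, or one uses exactness of $G(F_\nu)$ and property A for $G(F_\nu)/G(\mathcal O_\nu)$, as in Proposition \ref{CEHATM}. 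Second, in the adelic case your unweighted $\ell^1$-sum $\sum_\nu d_{\mathcal B_\nu}$ is \emph{not} coarsely equivalent to the canonical metric on $X$: bounded subsets of $X=G/K$ correspond to compact subsets of $G$, hence are supported, modulo $K$, at finitely many fixed places, whereas the set of cosets at building-distance $1$ from the basepoint at a single \emph{varying} place is bounded for your $\ell^1$-metric but unbounded in $X$. Any proper length on $G(\mathbb A_{F}^{f})$ must weight the place $\nu$ by constants $c_\nu\to\infty$, so your assembled map fails the lower coarse bound as written; the clean repair is again exactness ($G(\mathbb A_F^f)$ is an increasing union of the open subgroups $\prod_{\nu\in T}G(F_\nu)\times\prod_{\nu\notin T}K_\nu$, each exact), which gives property A and hence coarse embeddability of $X$. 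A minor further point: strong approximation requires noncompactness of $G$ at (some place of) $S$, resp.\ at infinity; but you do not need the completion to be the full restricted product --- the closure suffices, as recorded in Section 3 after \cite{tzanev2003hecke}.
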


\begin{proof}
Let $A$ be a $\Gamma$-algebra. Denote $\mathbb A_{F}$ be the ring of adèles of $ F$ and $R$ its compact open ring of integers. Observe the almost normal subgroup $\Lambda = G(\mathcal O)$: its Schlichting completion $G$ will be that of $(G(\mathbb A_{ F}) , G(R)) $ if $\Gamma = G( F)$, or that of $(\prod_{\nu \in S} G( F_\nu) , \prod_{\nu \in S} G(\mathcal O_\nu) )$ if $\Gamma = G(\mathcal O_S)$. 

Every group $L<\Gamma$ containing $\Lambda$ as a subgroup of finite index is exact, so that $\mu_{L,B}$ is injective for every coefficients $B$.  

By theorem 5.2 of \cite{kasparov2003groups}, the map $\mu_{G, A}$ is injective. The diagram
\[\begin{tikzcd}
K^{top}_\bullet(\Gamma\times G, C_0(G,A)) \arrow{r}{\mu_{\Gamma\times G, C_0(G,A)}} & K_\bullet( C_0(G,A)\rtimes_r (\Gamma\times G)) \\
K^{top}_\bullet(\Gamma, A) \arrow{r}{\mu_{\Gamma, A}}\arrow{u}{\cong} & K_\bullet( A\rtimes_r \Gamma)\arrow{u}{\cong} \\
\end{tikzcd}\]
commutes, and by factorisation via partial assemby maps, we have 
$$\mu_{\Gamma\times G, C_0(G,A)}  = \mu_{G,C_0(G,A)\rtimes_r \Gamma}\circ \mu^{(G)}_{\Gamma , A},$$
hence the map $\mu_{\Gamma, A}$ is injective. We conclude by descent principle.
\end{proof}

These two corollaries also follow from \cite{guentner2005novikov}, where it is proven that, if $K$ is a field, every countable subgroup of $GL(n,K)$ is coarsely embeddable into Hilbert space, and if $n=2$, actually a-T-menable.

\subsection{Lattices in mixed product groups}%

Theorem \ref{THM1} and corollary \ref{COR1} allow to prove the Baum-Connes conjecture for groups in class $\mathcal C$, some of which are non-a-T-menable. For instance, the group $\mathbb Z^2\rtimes SL(2,\mathbb Z[\frac{1}{p}])$ is not Haagerup since $\mathbb Z^2 <\mathbb Z^2\rtimes SL(2,\mathbb Z)$ has relative property (T). Moreover, $\mathbb Z^2\rtimes SL(2,\mathbb Z)$ satisfies the Baum-Connes conjecture with coefficients, and is almost normal in $\mathbb Z^2\rtimes SL(2,\mathbb Z[\frac{1}{p}])$. The Schlichting completion of the pair is $PSL(2,\mathbb Q_p)$, which is a-T-menable so that we are in the conditions of the theorem. 

This result could have actually been proved by Oyono-Oyono's stability result of the Baum-Connes conjecture by extensions since it is a-T-menable by a-T-menable, or by Oyono-Oyono's result on group acting on trees, since it acts on the Bass-Serre tree of $SL(2,\mathbb Q_p)$ with stabilizers that are finite by a-T-menable. 

In order to show that the class $\mathcal C$ is interesting, we want to build examples of discrete groups in $\mathcal C$ that are not a-T-menable, or more generally, not an extension of a-T-menable by a-T-menable, nor hyperbolic, nor acting on trees with a-T-menable stabilizers.

\begin{proposition}
Let $\Gamma< G$ be an irreducible lattice in a product $G=G_1\times G_2$ of locally compact groups such that $G_1$ is not compact and has property (T), and $G_2$ is totally disconnected and a-T-menable. Then $\Gamma$ is not a-T-menable and, for every compact open subgroup $K<G_2$, $\Lambda = \varphi^{-1}(K)$ is co-Haagerup in $\Gamma$ (and thus almost normal). 
\end{proposition}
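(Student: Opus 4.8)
The plan is to verify the hypotheses of the machinery of Sections 3 and 4 through the projection $\varphi = p_2|_\Gamma\colon \Gamma \to G_2$ onto the second factor, where $p_1,p_2$ denote the two coordinate projections of $G = G_1\times G_2$. By irreducibility of the lattice, both $p_1(\Gamma)$ and $p_2(\Gamma)$ are dense in their respective factors; in particular $\varphi(\Gamma)$ is dense in $G_2$, which is what Part 2 will use. There are two assertions to prove: that $\Gamma$ is not a-T-menable, and that $\Lambda := \varphi^{-1}(K)$ is co-Haagerup for every compact open $K<G_2$.

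For the first assertion I would argue by contraposition, transferring the Haagerup property between a lattice and its ambient group. If $\Gamma$ were a-T-menable, then, being a lattice in the (second countable) locally compact group $G$, the group $G$ would itself be a-T-menable (the Haagerup property passes from a lattice to the ambient group, see \cite{cherix2001groups}). Since a-T-menability is inherited by closed subgroups, the closed subgroup $G_1\times\{e\}\cong G_1$ would then be a-T-menable. But $G_1$ has property (T), and a group with property (T) is a-T-menable only if it is compact (a proper conditionally negative type function would be bounded, forcing compactness), contradicting the non-compactness of $G_1$. Hence $\Gamma$ is not a-T-menable. The essential point here is that one cannot invoke property (T) for $\Gamma$ directly: since $G_2$ is a-T-menable, $\Gamma$ has no reason to have property (T) and generally does not, so the argument must pass through the ambient product.

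For the second assertion I would first check that $(\Gamma,\Lambda)$ is a Hecke pair and identify its Schlichting completion. Since $K$ is compact open in $G_2$, for every $\gamma\in\Gamma$ the subgroup $K\cap \varphi(\gamma)K\varphi(\gamma)^{-1}$ is open in the compact group $K$, hence of finite index, and pulling back along the injection $\Gamma/\Lambda \hookrightarrow G_2/K$ (which sends $\Lambda\gamma\Lambda/\Lambda$ into $K\varphi(\gamma)K/K$) shows that each double coset $\Lambda\gamma\Lambda$ is a finite union of left $\Lambda$-cosets; thus $\Lambda$ is almost normal. The data $(\varphi, G_2, K)$ with $\varphi(\Gamma)$ dense and $\varphi^{-1}(K)=\Lambda$ satisfy the hypotheses of Lemma \ref{LmWillis}, so the Schlichting completion of $(\Gamma,\Lambda)$ is $G_2/N$, where $N = \bigcap_{g\in G_2} gKg^{-1}$ is the normal core of $K$; this $N$ is a closed subgroup of the compact group $K$, hence compact.

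It then remains to see that $G_2/N$ is a-T-menable and to translate this into the co-Haagerup property. Since $G_2$ is a-T-menable and $N$ is a compact normal subgroup, $G_2/N$ is a-T-menable (the Haagerup property is stable under quotients by compact normal subgroups, see \cite{cherix2001groups}). By Proposition \ref{CEHATM}, a-T-menability of the Schlichting completion is equivalent to $X=\Gamma/\Lambda$ admitting a $\Gamma$-equivariant coarse embedding into a Hilbert space, which, as recorded at the end of Section 3, is exactly the statement that $\Lambda$ is co-Haagerup in $\Gamma$; almost normality has already been established above (and is in any case automatic, co-Haagerup subgroups being almost normal). I expect the only genuinely delicate steps to be the lattice-to-ambient transfer of the Haagerup property in the first assertion and the stability under the compact quotient $G_2\to G_2/N$ in the second: both are standard but must be invoked with care, since neither general quotients nor dense-image homomorphisms preserve the Haagerup property.
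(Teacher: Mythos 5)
Your proof is correct and takes essentially the same route as the paper: non-a-T-menability by transferring the Haagerup property from the lattice $\Gamma$ to the ambient group $G$ (the paper phrases this via co-F\o lnerness and Proposition~6.1.5 of \cite{cherix2001groups}) and contradicting it on the noncompact property~(T) factor $G_1$, then identifying the Schlichting completion as $G_2/N$ via Lemma~\ref{LmWillis} and concluding co-Haagerup through Proposition~\ref{CEHATM}. The only differences are cosmetic: you spell out the almost-normality check, the compactness of $N$, and the compact-quotient stability that the paper leaves implicit, and you close the first argument with ``closed subgroups inherit Haagerup'' rather than relative property~(T) of the pair $(G,G_1)$.
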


\begin{proof}
Let us show that $\Gamma$ is not a-T-menable. Since $\Gamma$ is of finite covolume in $G$, $L^\infty(G/\Gamma)$ admits a $G$-invariant state: it is co-F\o lner. By proposition 6.1.5 of \cite{cherix2001groups}, if $\Gamma$ was a-T-menable, so would $G$ be. Since $(G, G_1)$ has relative property (T) and $G_1$ is not compact, this is impossible. 

Consider the morphism $\varphi : \Gamma\rightarrow G_2$ given by the second projection: by irreducibility, we are in the situtation of lemma \ref{LmWillis}. This ensures that $\Lambda$ is almost normal in $\Gamma$, and that the Schlichting completion of $(\Gamma,\Lambda)$ is the quotient of $G_2$ by the largest normal subgroup contained in $K$, hence it is a-T-menable. Thus $\Gamma / \Lambda$ admits an equivariant coarse embedding into Hilbert space (equivalently $\Lambda$ is co-Haagerup in $\Gamma$).
\end{proof}


Let $G$ be an absolutely simple algebraic group over $\mathbb Q$, and $(\Gamma,\Lambda)$ be the Hecke pair $\left(G\left(\mathbb Z[\frac{1}{p}]\right), G(\mathbb Z)\right)$. Since the Schlichting completion identifies with that of $\left(G(\mathbb Q_p), G(\mathbb Z_p)\right)$, it is enough to know that $rk_{\mathbb Q_p} G(\mathbb Q_p)=1$ to know that the pair is co-Haagerup. We thus have the following.

\begin{corollary}
If all subgroups $L<\Gamma$ containing $\Lambda$ with finite index satisfy the Baum-Connes conjecture with coefficients, and $rk_{\mathbb Q_p} G(\mathbb Q_p)=1$, then $\Gamma$ satisfy the Baum-Connes conjecture with coefficients.
\end{corollary}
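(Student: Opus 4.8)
The plan is to verify the two hypotheses of Theorem \ref{THM1} for the Hecke pair $(\Gamma,\Lambda)=\left(G(\mathbb Z[\frac1p]),G(\mathbb Z)\right)$. The first hypothesis, that every subgroup $L<\Gamma$ containing $\Lambda$ with finite index satisfies the Baum-Connes conjecture with coefficients, is precisely what we assume. The entire work therefore goes into establishing the second hypothesis, namely that $\Gamma/\Lambda$ admits a $\Gamma$-equivariant coarse embedding into a Hilbert space. By the first bullet of Proposition \ref{CEHATM}, this is equivalent to showing that the Schlichting completion of $(\Gamma,\Lambda)$ is a-T-menable.

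First I would identify the Schlichting completion. Consider the inclusion $\varphi:G(\mathbb Z[\frac1p])\hookrightarrow G(\mathbb Q_p)$ induced by $\mathbb Z[\frac1p]\subset\mathbb Q_p$. The subgroup $K=G(\mathbb Z_p)$ is compact open, and since $\mathbb Z[\frac1p]\cap\mathbb Z_p=\mathbb Z$ inside $\mathbb Q_p$ one has $\varphi^{-1}(K)=G(\mathbb Z)=\Lambda$. The image $\varphi(\Gamma)$ is dense in $G(\mathbb Q_p)$ by strong approximation. Lemma \ref{LmWillis} then applies and identifies the Schlichting completion with the reduced pair of $(G(\mathbb Q_p),G(\mathbb Z_p))$; concretely, it is isomorphic to $G(\mathbb Q_p)/N$, where $N$ is the largest normal subgroup of $G(\mathbb Q_p)$ contained in the compact group $G(\mathbb Z_p)$, and in particular $N$ is compact.

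Next I would prove that this completion is a-T-menable. The hypothesis $rk_{\mathbb Q_p}G(\mathbb Q_p)=1$ means that the Bruhat-Tits building of $G(\mathbb Q_p)$ is a tree, on which $G(\mathbb Q_p)$ acts by simplicial isometries with compact open vertex stabilizers. A continuous isometric action on a tree with compact stabilizers is metrically proper, so $G(\mathbb Q_p)$ is a-T-menable. Since a-T-menability passes to quotients by compact normal subgroups, the quotient $G(\mathbb Q_p)/N$ is again a-T-menable.

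Putting these together finishes the argument: Proposition \ref{CEHATM} now yields that $\Gamma/\Lambda$ coarsely embeds $\Gamma$-equivariantly into a Hilbert space, equivalently that $\Lambda$ is co-Haagerup in $\Gamma$, and Theorem \ref{THM1} applies to conclude that $\Gamma$ satisfies the Baum-Connes conjecture with coefficients. The only genuinely delicate step is the identification of the Schlichting completion: one must ensure that $\varphi(\Gamma)$ is dense, which rests on strong approximation and is most transparent in the simply connected case, any failure of simple-connectedness being absorbed by passing to the reduced pair and quotienting by the core $N$.
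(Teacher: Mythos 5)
Your proof is correct and takes essentially the same route as the paper, which compresses the argument into one sentence: identify the Schlichting completion of $\left(G\left(\mathbb Z[\frac{1}{p}]\right), G(\mathbb Z)\right)$ with (the reduced pair of) $\left(G(\mathbb Q_p), G(\mathbb Z_p)\right)$ via Lemma \ref{LmWillis}, deduce a-T-menability of the completion from the $rk_{\mathbb Q_p}=1$ hypothesis, translate this into co-Haagerupness via Proposition \ref{CEHATM}, and conclude with Theorem \ref{THM1}. Your version usefully makes explicit the details the paper leaves implicit, namely density via strong approximation, the proper action on the Bruhat--Tits tree, and stability of a-T-menability under quotients by the compact core $N$.
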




In general, the classification of rank $1$ groups over non archimedian local fields has been completed, and accounts can be found, for instance see \cite{tits1979reductive}. If one looks at groups with Tits index $C^2_{2,1}$ and $C^2_{3,1}$, choose a $\mathbb Q$-form $G$ such that $G(\mathbb R)$ is isomorphic to $Sp(n,1)$, with $n=3$ or $5$. Let $G(\mathcal O)$ be an arithmetic cocompact lattice: it is Gromov hyperbolic, thus any groups which contains it with finite index satisfies the Baum-Connes conjecture with coefficients by \cite{lafforgue2012conjecture}. Moreover, any $S$-arithmetic group $G(\mathcal O_S)$ will contain $\Lambda$ as a co-Haagerup almost normal subgroup, thus $\Gamma $ satisfies the Baum-Connes conjecture with coefficients. This gives example of countable subgroups of $GL(n,K)$, $n\leq 3$, that satisfy the Baum-Connes conjecture with coefficients.

In both these examples, $\Gamma$ is non a-T-menable, since it is an irreducible lattice in $G(\mathbb R)\times G(\mathbb Q_p)$, and by the $S$-arithmetic version of Margulis almost normal subgroup theorem, every normal subgroup of $\Gamma$ is either finite or commensurable to $\Gamma$, so that proving the Baum-Connes conjecture by expressing it as an extension will fail. Also, $\Gamma$ does not admit an isometric action on a tree with stabilizers satisfying the conjecture, nor is it hyperbolic. 



\bibliographystyle{plain}
\bibliography{biblio}

\begin{thebibliography}{10}

\bibitem{baum1994classifying}
Paul Baum, Alain Connes, and Nigel Higson.
\newblock Classifying space for proper actions and {K}-theory of group
  {C}*-algebras.
\newblock {\em Contemporary Mathematics}, 167:241--241, 1994.

\bibitem{brodzki2017exactness}
Jacek Brodzki, Chris Cave, and Kang Li.
\newblock Exactness of locally compact groups.
\newblock {\em Advances in Mathematics}, 312:209--233, 2017.

\bibitem{chabert2000baum}
J{\'e}r{\^o}me Chabert.
\newblock Baum-{C}onnes conjecture for some semi-direct products.
\newblock {\em Journal für die reine und angewandte {M}athematik ({C}relles
  {J}ournal)}, 2000(521):161--184, 2000.

\bibitem{chabert2003shapiro}
J{\'e}r{\^o}me Chabert, Siegfried Echterhoff, and Herv{\'e} Oyono-Oyono.
\newblock Shapiro's lemma for topological {K}-theory of groups.
\newblock {\em Commentarii Mathematici Helvetici}, 78(1):203--225, 2003.

\bibitem{chabert2004going}
J{\'e}r{\^o}me Chabert, Siegfried Echterhoff, and Herv{\'e} Oyono-Oyono.
\newblock Going-down functors, the {K}{\"u}nneth formula, and the
  {B}aum-{C}onnes conjecture.
\newblock {\em Geometric \& Functional Analysis GAFA}, 14(3):491--528, 2004.

\bibitem{cherix2001groups}
Pierre-Alain Cherix, Michael Cowling, Paul Jolissaint, Pierre Julg, and Alain
  Valette.
\newblock {\em Groups with the Haagerup property: Gromov’s a-T-menability},
  volume 197.
\newblock Springer Science \& Business Media, 2001.

\bibitem{cornulier2012proper}
Yves Cornulier, Yves Stalder, and Alain Valette.
\newblock Proper actions of wreath products and generalizations.
\newblock {\em Transactions of the American Mathematical Society},
  364(6):3159--3184, 2012.

\bibitem{dell2018topological}
Cl{\'e}ment Dell'Aiera and Rufus Willett.
\newblock Topological property ({T}) for groupoids.
\newblock {\em Annales de l'Institut Fourier}, 2021.

\bibitem{deng2019novikov}
Jintao Deng.
\newblock The {N}ovikov conjecture and extensions of coarsely embeddable
  groups.
\newblock {\em arXiv preprint arXiv:1910.05381}, 2019.

\bibitem{guentner2005novikov}
Erik Guentner, Nigel Higson, and Shmuel Weinberger.
\newblock The {N}ovikov conjecture for linear groups.
\newblock {\em Publications math{\'e}matiques de l'IH{\'E}S}, 101(1):243--268,
  2005.

\bibitem{guentner2017dynamic}
Erik Guentner, Rufus Willett, and Guoliang Yu.
\newblock Dynamic asymptotic dimension: relation to dynamics, topology, coarse
  geometry, and {C}*-algebras.
\newblock {\em Mathematische Annalen}, 367(1-2):785--829, 2017.

\bibitem{higson2000bivariant}
Nigel Higson.
\newblock Bivariant {K}-theory and the {N}ovikov conjecture.
\newblock {\em Geometric \& Functional Analysis GAFA}, 10(3):563--581, 2000.

\bibitem{higson2001theory}
Nigel Higson and Gennadi Kasparov.
\newblock {E}-theory and {KK}-theory for groups which act properly and
  isometrically on {H}ilbert space.
\newblock {\em Inventiones mathematicae}, 144(1):23--74, 2001.

\bibitem{higson2002counterexamples}
Nigel Higson, Vincent Lafforgue, and Georges Skandalis.
\newblock Counterexamples to the {B}aum—{C}onnes conjecture.
\newblock {\em Geometric \& Functional Analysis GAFA}, 12(2):330--354, 2002.

\bibitem{kasparov2003groups}
Gennadi Kasparov and Georges Skandalis.
\newblock Groups acting properly on" bolic" spaces and the {N}ovikov
  conjecture.
\newblock {\em Annals of mathematics}, pages 165--206, 2003.

\bibitem{lafforgue2002k}
Vincent Lafforgue.
\newblock K-th{\'e}orie bivariante pour les alg{\`e}bres de {B}anach et
  conjecture de {B}aum-{C}onnes.
\newblock {\em Inventiones mathematicae}, 149(1):1--95, 2002.

\bibitem{lafforgue2012conjecture}
Vincent Lafforgue.
\newblock La conjecture de {B}aum-{C}onnes {\`a} coefficients pour les groupes
  hyperboliques.
\newblock {\em Journal of Noncommutative Geometry}, 6(1):1--197, 2012.

\bibitem{oyono2001ext}
Herv{\'e} Oyono-Oyono.
\newblock Baum-{C}onnes conjecture and extensions.
\newblock {\em Journal für die reine und angewandte {M}athematik ({C}relles
  {J}ournal)}, 2001(532):133--149, 2001.

\bibitem{oyono2001baum}
Herv{\'e} Oyono-Oyono.
\newblock Baum-{C}onnes conjecture and group actions on trees.
\newblock {\em K-theory}, 24(2):115--134, 2001.

\bibitem{john1996index}
John Roe et~al.
\newblock {\em Index theory, coarse geometry, and topology of manifolds},
  volume~90.
\newblock American Mathematical Soc., 1996.

\bibitem{schlichting}
G{\"u}nter Schlichting.
\newblock Operationen mit periodischen stabilisatoren.
\newblock {\em Archiv der Mathematik}, 34(1):97--99, 1980.

\bibitem{shalom2013commensurated}
Yehuda Shalom and George~A Willis.
\newblock Commensurated subgroups of arithmetic groups, totally disconnected
  groups and adelic rigidity.
\newblock {\em Geometric and Functional Analysis}, 23(5):1631--1683, 2013.

\bibitem{skandalis2002coarse}
Georges Skandalis, Jean-Louis Tu, and Guoliang Yu.
\newblock The coarse {B}aum-{C}onnes conjecture and groupoids.
\newblock {\em Topology}, 41(4):807--834, 2002.

\bibitem{tits1979reductive}
Jacques Tits.
\newblock Reductive groups over local fields.
\newblock In {\em Automorphic forms, representations and L-functions (Proc.
  Sympos. Pure Math., Oregon State Univ., Corvallis, Ore., 1977), Part},
  volume~1, pages 29--69, 1979.

\bibitem{tu1999conjecture}
Jean-Louis Tu.
\newblock La conjecture de {B}aum-{C}onnes pour les feuilletages moyennables.
\newblock {\em K-theory}, 17(3):215--264, 1999.

\bibitem{tzanev2003hecke}
Kroum Tzanev.
\newblock Hecke {C}*-algebras and amenability.
\newblock {\em Journal of Operator Theory}, pages 169--178, 2003.

\bibitem{willett2006some}
Rufus Willett.
\newblock Some notes on property {A}.
\newblock {\em arXiv preprint math/0612492}, 2006.

\bibitem{willett2014geometric}
Rufus Willett and Guoliang Yu.
\newblock Geometric property ({T}).
\newblock {\em Chinese Annals of Mathematics, Series B}, 35(5):761--800, 2014.

\end{thebibliography}

\end{document}